\newcommand{\Z}{\mathbb{Z}}
\newcommand{\N}{\mathbb{N}}
\newcommand{\G}{\mathcal{G}}
\newcommand{\C}{\mathbb{C}}
\newcommand{\K}{\mathcal{K}}
\newcommand{\id}{{\operatorname{Id}}}
\newcommand{\TSS}{{\overline{X}}}
\newcommand{\tss}{{\overline{\sigma}}}
\newcommand{\supp}{\operatorname{supp}}
\newcommand{\cls}[1][(R,\Gamma)]{\mathcal{C}_{#1}}
\theoremstyle{theorem}
\newtheorem{theorem}{Theorem}[section]
\newtheorem{proposition}[theorem]{Proposition}
\newtheorem{corollary}[theorem]{Corollary}
\newtheorem{lemma}[theorem]{Lemma}
\theoremstyle{definition}
\newtheorem{definition}[theorem]{Definition}
\newtheorem{remark}[theorem]{Remark}
\title{Diagonal-preserving graded isomorphisms of Steinberg algebras}
\author{Toke Meier Carlsen}
\address{Department of Science and Technology\\University of the Faroe Islands\\
N\'oat\'un 3\\ FO-100 T\'orshavn\\ Faroe Islands}
\email{toke.carlsen@gmail.com}
\author{James Rout}
\address{School of Mathematics and Applied Statistics\\ University of Wollongong\\
NSW 2522\\ Australia}
\email{jdr749@uowmail.edu.au}
\date{\today}
\subjclass[2010]{Primary: 22A22; Secondary: 16S99, 46L55}
\keywords{Steinberg algebras, Leavitt path algebras, graph $C^*$-algebras, ample Hausdorff groupoids, graded isomorphisms, groupoid cocycles, eventual conjugacy, conjugacy, flow equivalence, subshifts, shift spaces}
\thanks{The second author was partially supported by the ARC grant DP150101595 to Aidan Sims, the Simons-Fundation grant 346300 and the Polish Government MNiSW 2015-2019 matching fund.}
\begin{document}
	
\maketitle

\begin{abstract}
	We study Steinberg algebras constructed from ample Hausdorff groupoids over commutative integral domains with identity. We reconstruct (graded) groupoids from (graded) Steinberg algebras and use this to characterise when there is a diagonal-preserving (graded) isomorphism between two (graded) Steinberg algebras. We apply this characterisation to groupoids of directed graphs in order to study diagonal-preserving (graded) isomorphisms of Leavitt path algebras and $*$-isomorphisms of graph $C^*$-algebras.
\end{abstract}

\section{Introduction}


Steinberg algebras, introduced in \cite{S} and independently in \cite{CFST}, are algebraic analogues of groupoid $C^*$-algebras. The class of Steinberg algebras includes for instance discrete inverse semigroup algebras (see for example \cite{S}), Kumjian--Pask algebras (see for example \cite{CP}), and Leavitt path algebras (see for example \cite{CS}). Steinberg algebras have recently attracted a great deal of attention (see for instance \cite{ABHS,CE,CEHS,CEP,CMMS,S1,S2}).

For an ample Hausdorff groupoid $\G$ and a ring $R$ with identity, the Steinberg algebra $A_R(\G)$ is the convolution algebra of locally constant functions from $\G$ to $R$. When $R=\C$, the Steinberg algebra $A_\C(\G)$ is a dense subalgebra of the groupoid $C^*$-algebra $C^*(\G)$. The algebra of locally constant functions from the unit space $\G^0$ of $\G$ to $R$ is a commutative subalgebra of $A_R(\G)$, called the diagonal and denoted $D_R(\G)$. If $\Gamma$ is a discrete group, then any continuous cocycle (i.e., a groupoid homomorphism) $c:\G\to\Gamma$ induces a $\Gamma$-grading of $A_R(\G)$.

We show that for a commutative integral domain with identity $R$, an ample Hausdorff groupoid $\G$, a discrete group $\Gamma$, and a continuous cocycle $c:\G\to\Gamma$ satisfying a mild condition, the groupoid $\G$ and the cocycle $c$ can be recovered from the Steinberg algebra $A_R(\G)$, its diagonal subalgebra $D_R(\G)$, and the $\Gamma$-grading of $A_R(\G)$. It follows that if $\G_1$ and $\G_2$ are two ample Hausdorff groupoids and $c_1:\G_1\to\Gamma$ and $c_2:\G_2\to\Gamma$ are continuous cocycles satisfying the aforementioned condition, then there is a $\Gamma$-graded isomorphism $\phi:A_R(\G_1)\to A_R(\G_2)$ satisfying $\phi(D_R(\G_1))=D_R(\G_2)$ (we call such an isomorphism \emph{diagonal-preserving}), if and only if there is a topological groupoid isomorphism $\psi:\G_1\to\G_2$ such that $c_2\circ\psi=c_1$. 

It is worth noting that we only need the ring structure of $A_R(\G)$ and $D_R(\G)$ to recover $\G$. It follows that we only need a diagonal-preserving $\Gamma$-graded ring-isomorphism $\phi:A_R(\G_1)\to A_R(\G_2)$ in order to conclude that $\G_1$ and $\G_2$ are isomorphic as graded topological groupoids. On the other hand, if $\G_1$ and $\G_2$ are isomorphic as graded topological groupoids, then it follows that $A_R(\G_1)$ and $A_R(\G_2)$ are isomorphic as $*$-algebras by a diagonal-preserving isomorphism. Thus, if there is a diagonal-preserving $\Gamma$-graded ring-isomorphism between $A_R(\G_1)$ and $A_R(\G_2)$, then there is also a diagonal-preserving $\Gamma$-graded $*$-algebra-isomorphism between $A_R(\G_1)$ and $A_R(\G_2)$.

If we let $\Gamma$ be the trivial group, then the cocycles and the gradings also become trivial, and our result then says that two groupoids $\G_1$ and $\G_2$ belonging to a large class of ample Hausdorff groupoids, which contains all topologically principal ample Hausdorff groupoids and any ample Hausdorff groupoid all of whose isotropy subgroups are either free groups or free abelian groups, are isomorphic if and only if there is a diagonal-preserving isomorphism between $A_R(\G_1)$ and $A_R(\G_2)$. 

\subsection{Motivation and historical background}
When $\G$ is the graph groupoid $\G_E$ of a directed graph $E$, the Steinberg algebra $A_R(\G_E)$ is precisely the Leavitt path algebra $L_R(E)$, and the diagonal subalgebra $D_R(\G_E)$ is precisely the diagonal subalgebra $D_R(E)$ of $L_R(E)$ (\cite[Remark~4.4]{CFST} and \cite[Example~3.2]{CS}).



Working with the Steinberg algebra model of a Leavitt path algebra, Brown, Clark and an Huef showed in \cite{BCH} that if $E$ is a row-finite directed graph without sinks (sources using their convention) and $R$ is a commutative integral domain with identity, then the graph groupoid $\G_E$ can be reconstructed from the data $(L_R(E),D_R(E))$ (\cite[Theorem~4.9]{BCH}). They used this reconstruction result to show that, for row-finite directed graphs without sinks, there is a diagonal-preserving $*$-isomorphism between Leavitt path algebras if and only if there is an isomorphism between the corresponding graph groupoiods (\cite[Theorem~6.2]{BCH}), and deduced that this is equivalent to there being a diagonal-preserving isomorphism between the corresponding graph $C^*$-algebras (\cite[Corollary~6.3]{BCH}). 

Ara, Bosa, Hazrat and Sims showed in \cite{ABHS} that if $\G$ is an ample Hausdorff groupoid equipped with a cocycle whose kernel is topologically principal, and $R$ is a commutative integral domain with identity, then $\G$ can be reconstructed from the data $(A_R(\G),D_R(\G))$ (\cite[Corollary~3.11]{ABHS}). They used this reconstruction result to show that there is a diagonal-preserving graded isomorphism of Steinberg algebras if and only if there is a cocycle-preserving isomorphism between the corresponding groupoids (\cite[Theorem~3.1]{ABHS}). By applying this to graph groupoids, they deduced, among many other interesting results, that, for directed graphs in which every cycle has an exit, there is a diagonal-preserving isomorphism between Leavitt path algebras if and only if there is a diagonal-preserving $*$-isomorphism between the corresponding graph $C^*$-algebras (\cite[Corollary~4.4]{ABHS}). 


\subsection{Presentation of the results}
In this paper, we use techniques and ideas of \cite{ABHS} and \cite{BCH} to extend \cite[Theorem~3.1]{ABHS} to a larger class of ample Hausdorff groupoids by significantly relaxing the assumption that the kernel of the cocycle is topologically principal (Theorem~\ref{thm:steinberg}). We also prove a ``stabilised version" of this result (Theorem~\ref{thm:stable}) and by combining this with \cite[Theorem 3.2]{CRS} we are able to relate groupoid equivalence and Kakutani equivalence with diagonal-preserving isomorphism of stabilised Steinberg algebras (Corollary~\ref{cor:stable}). We also discuss how diagonal-preserving graded isomorphisms of Steinberg algebras are related to actions of inverse semigroups (Corollary~\ref{cor:action}). 

All of these results hold for arbitrary ample Hausdorff groupoids $\G$ and cocycles $c:\G\to\Gamma$ with values in a discrete group, satisfying a mild condition, which in particular is satisfied if there is a dense subset $X \subseteq \G^0$ of the unit space of $\G$ such that $c^{-1}(e)\cap\G_x^x$ (where $\G_x^x$ is the isotropy group) is a free or free abelian group for all $x \in X$ (for Corollary~\ref{cor:stable} we also require that $\G^0$ be $\sigma$-compact). In particular, these results hold for the groupoid of an arbitrary directed graph, the groupoid of a finitely-aligned higher-rank graph, the groupoid of a partial action of a free or a free abelian group on a totally disconnected locally compact Hausdorff space, and many tight groupoids of inverse semigroups (see Remark~\ref{cls}).

In the second half of the paper, we apply our results to the groupoids of directed graphs in order to strengthen \cite[Corollary 4.4]{ABHS}, \cite[Theorem 5.3]{AER}, \cite[Theorem 6.2]{BCH}, \cite[Corollary 6.1]{CEOR}, and \cite[Corollary 6.4]{CEOR}, and to obtain Leavitt path algebra versions of the results of \cite{CR}. In particular, we show that if $R$ is a commutative integral domain with identity and $E$ and $F$ are arbitrary directed graph, then there is a diagonal-preserving isomorphism between $L_R(E)$ and $L_R(F)$ if and only if the graph groupoids $\G_E$ and $\G_F$ are isomorphic (Corollary~\ref{cor:orbit}). By \cite{BCW}, the latter condition is equivalent to the existence of a diagonal-preserving isomorphism between the graph $C^*$-algebras $C^*(E)$ and $C^*(F)$, and by \cite{AER}, it is also equivalent to the existence of an orbit equivalence from $E$ to $F$ that preserves isolated eventually periodic points. We also show that there is a diagonal-preserving isomorphism between the stabilised Leavitt path algebras $L_R(E)\otimes M_\infty (R)$ and $L_R(F)\otimes M_\infty (R)$ if and only if the groupoids $\G_E$ and $\G_F$ are groupoid equivalent (Corollary~\ref{thm:4}). By \cite{CRS}, the latter condition is equivalent to $\G_E$ and $\G_F$ being Kakutani equivalent, and to the existence of a diagonal-preserving isomorphism between the stabilised graph $C^*$-algebras $C^*(E)\otimes\K$ and $C^*(F)\otimes\K$. It is shown in \cite{CEOR} that if $E$ and $F$ are finite and have no sinks or sources, then the latter condition is equivalent to the two-sided edge shifts of $E$ and $F$ being flow equivalent. In this case, we also prove, by using \cite{CR}, that if $L_R(E)$ and $L_R(F)$ are equipped with the standard $\mathbb{Z}$-grading, then there is a diagonal-preserving graded isomorphism between the stabilised Leavitt path algebras $L_R(E)\otimes M_\infty (R)$ and $L_R(F)\otimes M_\infty (R)$ if and only if the two-sided edge shifts of $E$ and $F$ are conjugate (Corollary~\ref{thm:43}).



\section{Ample Hausdorff groupoids and their Steinberg algebras}

For the benefit of the reader, we recall in this section the definitions of ample Hausdorff groupoids and Steinberg algebras. This is standard and can be found in many other papers, for example \cite{ABHS} and \cite{BCH}.


\subsection{Ample Hausdorff groupoids}

A \emph{groupoid} is a small category $\G$ with inverses. The \emph{unit space} $\G^0$ of $\G$ is the collection of identity morphisms. That is, $\G^0 = \{ \eta \eta^{-1}: \eta \in \G \}$. The \emph{range map} $r: \G \to \G^0$ is given by $r(\eta) := \eta \eta^{-1}$ and the \emph{source map} $s: \G \to \G^0$ is given by $s(\eta) = \eta^{-1} \eta$. We say a pair $(\alpha,\beta) \in \G \times \G$ is \emph{composable} is $s(\alpha) = r(\beta)$, and denote by $\G^2$ the collection of all composable pairs.

For $U,V \subseteq \G$, we write $UV := \{ \alpha \beta: \alpha \in U, \beta \in V, \text{ and } s(\alpha) = r(\beta) \}$. For $U \subseteq \G$, we write $U^{-1} := \{ \eta^{-1}: \eta \in U \}$. Given units $x,y \in \G^0$, we write $\G_x := s^{-1}(x)$, $\G^y= r^{-1}(y)$, and $\G^x_y  := \G_x \bigcap \G^y$. Note that $\G_x^x$ is a group called the \emph{isotropy group} for $x \in \G^0$.

We say that $\G$ is a Hausdorff groupoid if it has a Hausdorff topology under which the range, source and inverse maps are continuous, and the composition map is continuous with respect to the subspace topology on $\G^2 \subseteq \G \times \G$. We say that $\G$ is \emph{\'{e}tale} if the range and source maps are each local homeomorphisms. If $\G$ is \'{e}tale, then $\G^0$ is clopen in $\G$. We say that $\G$ is \emph{ample} if it is \'{e}tale and $\G^0$ has a basis consisting of compact open sets. A \emph{bisection} is a subset $U \subseteq \G$ such that the range and source maps restrict to homeomoprhisms from $U$ onto open subsets of $\G^0$. If $\G$ is ample, then it admits a basis consisting of compact open bisections.

Let $\Gamma$ be a discrete group. A \emph{continuous cocycle} $c$ is a continuous homomorphism from $\G$ to $\Gamma$ (that is, $c$ carries composition in $\G$ to the group operation in $\Gamma$).

An isomorphism of Hausdorff groupoids $\phi:\G \to \mathcal{H}$ is a homeomorphism from $\G$ to $\mathcal{H}$ that carries units to units, preserves the range and source maps, and satisfies $\phi(\alpha \beta) = \phi(\alpha)\phi(\beta)$ for all composable pairs $(\alpha,\beta) \in \G^2$. The uniqueness of inverses implies that $\phi(\alpha^{-1})=\phi(\alpha)^{-1}$ for $\alpha \in \G$.

\subsection{Steinberg algebras}

Let $R$ be a commutative ring with identity and $\G$ an ample Hausdorff groupoid. The \emph{Steinberg algebra} $A_R(\G)$ of $\G$ is the collection of locally constant $R$-valued functions on $\G$ with compact support with pointwise addition, and convolution product $(f * g) (\eta)=\sum_{\alpha \beta = \eta} f(\alpha)g(\beta)$. For $f \in A_R(\G)$, we write $\supp(f) = \{ \eta \in \G: f(\eta) \ne 0 \}$.

Denote by $S_\G$ the collection of all compact open bisections of $\G$. Note that $A_R(\G) = \operatorname{span}_R \{1_U: U \in S_\G \}$ by \cite[Proposition~4.3]{S}, and $1_U 1_V = 1_{UV}$ for $U,V \in S_\G$ by \cite[Proposition~3.5(3)]{S}.

Let $\Gamma$ be a discrete group and $c: \G \to \Gamma$ a continuous cocycle. By \cite[Lemma~3.1]{CS} there is a $\Gamma$-grading of $A_R(\G)$ such that $A_R(\G)_g = \{ f \in A_R(\G): \supp(f) \subseteq c^{-1}(g) \}$ for $g \in \Gamma$.

We write $D_R(\G) := A_R(\G^0)$ for the commutative algebra of locally constant compactly supported functions from $\G^0$ to $R$ under pointwise operations. Since $\G^0$ is clopen there is an embedding $\iota: A_R(\G^0) \to A_R(\G)$ such that $\iota(f)|_{\G^0} = f$ and $\iota(f)|_{\G \backslash \G^0} = 0$. With this embedding we regard $D_R(\G)$ as a commutative subalgebra of $A_R(\G)$ and call it the \emph{diagonal} of $A_R(\G)$. When $R$ is a commutative integral domain, we have that $D_R(\G) =\operatorname{span}_R\{1_U: U \subseteq \G^0 \text{ is compact open} \}.$

For ample Hausdorff groupoids $\G_1$ and $\G_2$ and continuous cocycles $c_1:\G_1\to\Gamma$, $c_2:\G_2\to\Gamma$, we say that an isomorphism $\phi: A_R(\G_1) \to A_R(\G_2)$ is \emph{diagonal-preserving} if $\phi(D_R(\G_1))=D_R(\G_2)$, and \emph{graded} if $\phi(A_R(\G_1)_g) = A_R(\G_2)_g$ for $g \in \Gamma$.

If $R$ is a $*$-ring (i.e., $R$ comes with an involution $r\mapsto r^*$ that is also a ring automorphism), then $A_R(\G)$ is a $*$-algebra with the involution $f\mapsto f^*$ defined by $f^*(\eta)=(f(\eta^{-1}))^*$ for $f\in A_R(\G)$ and $\eta\in \G$. We assume throughout the paper that every ring $R$ is a $*$-ring (the involution could be the identity map).

\section{Recovering $\G$ from $(A_R(\G),D_R(\G))$}

In this section we strengthen \cite[Theorem 3.1]{ABHS} by significantly relaxing the assumption that $c^{-1}(e)$ and $d^{-1}(e)$ are topologically principal (Theorem~\ref{thm:steinberg}). We also prove a ``stabilised version" of Theorem~\ref{thm:steinberg} (Theorem~\ref{thm:stable}). By combining this with \cite[Theorem 3.2]{CRS}, we relate groupoid equivalence and Kakutani equivalence with diagonal-preserving isomorphisms of stabilised Steinberg algebras (Corollary~\ref{cor:stable}). We end the section by relating diagonal-preserving graded isomorphisms of Steinberg algebras with actions of inverse semigroups (Corollary~\ref{cor:action}).

Let $R$ be a commutative integral domain with identity $1$ and $G$ a group with identity $e$. The \emph{group-ring} $RG$ of $G$ is defined by $RG := \{ \sum_{i=1}^n r_i g_i: r_i \in R, g_i\in G\}$. An element $a \in RG$ is a \emph{unit} if there exist $b,c \in RG$ such that $ab = 1 = ca$. A unit is \emph{trivial} if it has the form $ug$ for some $u \in R$ and $g\in G$. Througout this section we will work with groups $G$ that have the property that the group-ring $RG$ has no zero-divisors and only trivial units. A group $G$ is \emph{indexed} if there is a homomorphism $\gamma: G \to \mathbb{Q}^+$ such that $\gamma(G) \not = \{ 0\}$. A group $G$ is \emph{indicable throughout} if every subgroup $H \subseteq G$ such that $H \not = \{e\}$ is indexed. For example, free groups and free Abelian groups are indicable throughout. If $G$ is indicable througout, then $RG$ has no zero-divisors and only trivial units (\cite[Theorem~12 and Theorem~13]{Higman}). Notice that if $H$ is a subgroup of $G$, and $RG$ has no zero-divisors and only trivial units, then $RH$ has no zero-divisors and only trivial units.

In this section we prove the following theorem.

\begin{theorem}\label{thm:steinberg}
Let $R$ be a commutative integral domain with identity and let $\Gamma$ be a discrete group. For $i=1,2$, let $\G_i$ be an ample Hausdorff groupoid and $c_i:\G_i\to\Gamma$ a continuous cocycle such that there is a dense subset $X_i \subseteq \G_i^0$ such that the group-ring $R(c_i^{-1}(e)\cap (\G_i)_x^x)$ has no zero-divisors and only trivial units for all $x \in X_i$. The following are equivalent.
\begin{enumerate}
	\item There is an isomorphism $\psi:\G_1\to\G_2$ such that $c_2\circ\psi=c_1$.
	\item There is a diagonal-preserving graded $*$-algebra-isomorphism between $A_R(\G_1)$ and $A_R(\G_2)$.
	\item There is a diagonal-preserving graded ring-isomorphism between $A_R(\G_1)$ and $A_R(\G_2)$.
\end{enumerate}
\end{theorem}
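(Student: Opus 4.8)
The plan is to prove $(1)\Rightarrow(2)\Rightarrow(3)$ directly and to concentrate the real work on the reconstruction $(3)\Rightarrow(1)$. For $(1)\Rightarrow(2)$, given a cocycle-preserving isomorphism $\psi\colon\G_1\to\G_2$ I would set $\phi(f)=f\circ\psi^{-1}$ for $f\in A_R(\G_1)$. Since $\psi$ is a topological groupoid isomorphism it maps compact open bisections to compact open bisections, so $\phi$ is a well-defined $R$-linear bijection; using $1_U1_V=1_{UV}$ and the fact that $\psi$ preserves composability one checks that $\phi$ is multiplicative, and $\psi(\alpha^{-1})=\psi(\alpha)^{-1}$ gives $\phi(f^*)=\phi(f)^*$. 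As $\psi(\G_1^0)=\G_2^0$ we have $\phi(D_R(\G_1))=D_R(\G_2)$, and as $c_2\circ\psi=c_1$ we have $\supp(\phi(f))\subseteq c_2^{-1}(g)$ whenever $\supp(f)\subseteq c_1^{-1}(g)$, so $\phi$ is diagonal-preserving and graded. The implication $(2)\Rightarrow(3)$ is immediate, since a $*$-algebra-isomorphism is in particular a ring-isomorphism.

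The substance is $(3)\Rightarrow(1)$, and my approach is to build, functorially and using only the ring structure of $A_R(\G)$ together with the subring $D_R(\G)$ and the $\Gamma$-grading, a graded topological groupoid $\G_{(A,D,c)}$, and then to show that the tautological map $\G\to\G_{(A,D,c)}$ is an isomorphism carrying $c$ to the intrinsic cocycle. Granting this, a diagonal-preserving graded ring-isomorphism $\phi\colon A_R(\G_1)\to A_R(\G_2)$ transports one such groupoid to the other, and composing with the two tautological isomorphisms yields the desired $\psi$ with $c_2\circ\psi=c_1$. First I would recover the unit space: because $R$ is an integral domain the idempotents of $D_R(\G)$ are exactly the functions $1_K$ with $K\subseteq\G^0$ compact open, and these form a generalized Boolean algebra whose Stone dual is canonically homeomorphic to $\G^0$; thus $\phi|_{D}$ induces a homeomorphism $\G_1^0\to\G_2^0$.

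Next I would recover the groupoid from the normalizers of $D_R(\G)$ in $A_R(\G)$. Since condition $(3)$ provides only a ring-isomorphism, it is essential to characterize normalizers without reference to $*$: for a compact open bisection $U$ the element $1_{U^{-1}}$ is recovered as the generalized inverse of $1_U$ determined by the purely multiplicative relations $1_U1_{U^{-1}}1_U=1_U$ and $1_{U^{-1}}1_U1_{U^{-1}}=1_{U^{-1}}$ with $1_U1_{U^{-1}},1_{U^{-1}}1_U\in D_R(\G)$, which lets the products that previously used the involution be expressed multiplicatively. Each homogeneous normalizer then implements a partial homeomorphism of open subsets of $\G^0$ and carries a well-defined $\Gamma$-degree coming from the grading, and I would assemble these into a germ groupoid whose range, source, and cocycle are read off from the partial homeomorphism and the degree.

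The hard part will be the presence of isotropy, where the naive germ relation is too coarse: distinct groupoid elements over the same pair of units can induce the same germ of partial homeomorphism, so the germ groupoid need not reproduce $\G$ unless $\G$ is \emph{effective}. This is precisely where the hypothesis is used. For $x$ in the dense set $X$, multiplying a normalizer by suitable indicator functions localizes it to an element of the group ring $R(c^{-1}(e)\cap\G_x^x)$, and the assumption that this ring has no zero-divisors and only trivial units forces the localization to be a single monomial $u\gamma$; this pins each relevant germ down to one groupoid element rather than a combination spread across the isotropy, and yields a bijection between germs and groupoid elements over $X$. I would then use density of $X$ together with the étale structure to extend this identification to a homeomorphism of the whole groupoids that respects multiplication, and finally check, from the way degrees were attached, that the recovered cocycles match, giving $c_2\circ\psi=c_1$. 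I expect the delicate points to be the involution-free bookkeeping for normalizers and the verification that the trivial-units argument, established only over the dense set $X$, genuinely extends by continuity to all of $\G$.
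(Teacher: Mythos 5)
Your proposal is correct and follows essentially the same route as the paper: the involution-free normaliser pairs $(m,n)$ with $mnm=m$, $nmn=n$, the Stone-duality recovery of $\G^0$ from the idempotents of $D_R(\G)$, the germ groupoid built from homogeneous normalisers (the paper's $W_\G$), the use of the trivial-units/no-zero-divisors hypothesis to force localisations at $x\in X$ to be monomials so that supports are bisections, and the extension from the dense set $X$ by the \'etale structure. No substantive differences to report.
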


It is perhaps worth mentioning that if $\Gamma$ is the trivial group, then the cocycles $c_i$ and the gradings of $A_R(\G_i)$ are trivial, and we obtain the following corollary from Theorem~\ref{thm:steinberg}.

\begin{corollary}\label{cor:steinberg}
Let $R$ be a commutative integral domain with identity. For $i=1,2$, let $\G_i$ be an ample Hausdorff groupoid such that there is a dense subset $X_i \subseteq \G_i^0$ such that the group-ring $R((\G_i)_x^x)$ has no zero-divisors and only trivial units for all $x \in X_i$. The following are equivalent.
\begin{enumerate}
	\item $\G_1$ and $\G_2$ are isomorphic.
	\item There is a diagonal-preserving $*$-algebra-isomorphism between $A_R(\G_1)$ and \linebreak$A_R(\G_2)$.
	\item There is a diagonal-preserving ring-isomorphism between $A_R(\G_1)$ and $A_R(\G_2)$.
\end{enumerate}
\end{corollary}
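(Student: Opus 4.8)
The plan is to obtain Corollary~\ref{cor:steinberg} as the special case of Theorem~\ref{thm:steinberg} in which $\Gamma$ is the trivial group $\{e\}$. When $\Gamma=\{e\}$, each ample Hausdorff groupoid $\G_i$ admits exactly one continuous cocycle $c_i\colon\G_i\to\Gamma$, namely the constant map $c_i\equiv e$. So I would begin by fixing these trivial cocycles and checking that the hypotheses of the theorem reduce verbatim to those of the corollary.

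For the hypotheses: since $c_i$ is constant at $e$, we have $c_i^{-1}(e)=\G_i$, and hence $c_i^{-1}(e)\cap(\G_i)_x^x=(\G_i)_x^x$ for every $x\in\G_i^0$. Thus the requirement in Theorem~\ref{thm:steinberg} that $R(c_i^{-1}(e)\cap(\G_i)_x^x)$ have no zero-divisors and only trivial units for all $x$ in a dense subset $X_i\subseteq\G_i^0$ becomes exactly the hypothesis of the corollary on $R((\G_i)_x^x)$. Hence the standing assumptions of the theorem coincide with the assumptions of the corollary.

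It then remains to see that, under $\Gamma=\{e\}$, each of the three conditions of Theorem~\ref{thm:steinberg} collapses to the corresponding condition here. In condition~(1), the intertwining constraint $c_2\circ\psi=c_1$ is automatically satisfied, since both sides are the constant map to $e$; so it reduces to the mere existence of a groupoid isomorphism $\psi\colon\G_1\to\G_2$, which is Corollary~\ref{cor:steinberg}(1). For conditions~(2) and~(3), the trivial group induces the trivial grading $A_R(\G_i)=A_R(\G_i)_e$, so every ($*$-)algebra isomorphism is vacuously graded and the adjective ``graded'' drops out, yielding precisely Corollary~\ref{cor:steinberg}(2) and~(3). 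Invoking the equivalence already established in Theorem~\ref{thm:steinberg} then gives the claimed equivalence.

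There is essentially no obstacle to overcome: the entire argument is the bookkeeping of the specialization $\Gamma=\{e\}$, and all the substantive content has already been discharged in the proof of Theorem~\ref{thm:steinberg}. The only points requiring any care are to confirm that a trivial grading forces every isomorphism to be graded and that the cocycle-intertwining condition becomes vacuous, both of which are immediate.
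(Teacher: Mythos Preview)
Your proposal is correct and matches the paper's approach exactly: the paper simply remarks that Corollary~\ref{cor:steinberg} follows from Theorem~\ref{thm:steinberg} by taking $\Gamma$ to be the trivial group, and your write-up just spells out this specialization in more detail.
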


For a commutative integral domain $R$ with identity and a discrete group $\Gamma$, let $\cls$ be the class of pairs $(\G,c)$, where $\G$ is an ample Hausdorff groupoid and $c$ is a continuous cocycle from $\G$ to $\Gamma$ such that there is a dense subset $X \subseteq \G^0$ such that the group-ring $R(c^{-1}(e)\cap(\G)_x^x)$ has no zero-divisors and only trivial units for all $x \in X$. Theorem~\ref{thm:steinberg} then says that for $(\G_1,c_1),(\G_2,c_2)\in\cls$, there is a diagonal-preserving graded isomorphism between $A_R(\G_1)$ and $A_R(\G_2)$ if and only if there is an isomorphism $\psi:\G_1\to\G_2$ such that $c_2\circ\psi=c_1$.

\begin{remark}\label{cls}
Let $R$ be any commutative integral domain with identity and $\Gamma$ any discrete group. The following pairs belong to $\cls$.
\begin{itemize}
	\item The pair $(\G_E,c)$, where $\G_E$ is the groupoid of a directed graph $E$ (see for example \cite{AER,BCW,Car,CR,CW,CS}) and $c$ is any continuous cocycle from $\G_E$ to $\Gamma$ (see the proof of Theorem~\ref{thm:1});
	\item the pair $(\G_\Lambda,c)$, where $\G_\Lambda$ is the groupoid of a finitely-aligned higher-rank graph $\Lambda$ (see for example \cite{CP,Y}) and $c$ is any continuous cocycle from $\G_\Lambda$ to $\Gamma$;
	\item the pair $(\G_{\text{tight}}(T),c)$, where $\G_{\text{tight}}(T)$ is the tight groupoid constructed from a Boolean dynamical system in \cite{COP}, and $c$ is any continuous cocycle from $\G_{\text{tight}}(T)$ to $\Gamma$;
	\item the pair $(\G_\theta,c_\phi)$, where $\G_\theta$ is the groupoid constructed in \cite{Abadie} from a partial action $\theta$ of a discrete group $G$ on a totally disconnect locally compact Hausdorff space $X$, and $c_\phi:\G_\theta\to\Gamma$ is the cocycle induced by a group homomorphism $\phi:G\to\Gamma$ such that the group-ring $R(\ker\phi)$ has no zero-divisors and only trivial units;
	\item the pair $(\G_{\text{tight}}(\mathcal{S}),c_\phi)$, where $\G_{\text{tight}}(\mathcal{S})$ is the tight groupoid of an $E^*$-unitary inverse semigroup (see \cite{Exel,EP}), and $c_\phi$ is the cocycle induced by a homomorphism $\phi:\mathcal{S}\to\Gamma$ such that there is a dense subset $X \subseteq \hat{\mathcal{E}}_{\text{tight}}$ such that the group-ring $R(G_x)$, where $G_x$ is the group $\{[s,x]\in\G_{\text{tight}}(\mathcal{S}): \phi(s)=e,\ \theta_s(x)=x\}$, has no zero-divisors and only trivial units for all $x \in X$.
\end{itemize}
\end{remark}

The implication $(1)\implies (2)$ in Theorem~\ref{thm:steinberg} follows easily from the definitions of $A_R(\G)$, $D_R(\G)$ and the grading on $A_R(\G)$, and the implication $(2)\implies (3)$ is obvious. Our strategy for proving $(3)\implies (1)$ is similar to the strategy used to prove \cite[Theorem 6.2]{BCH}. For $(\G,c)\in\cls$, we construct from $A_R(\G)$ and $D_R(\G)$ a graded ample Hausdorff groupoid $W_\G$ such that $W_\G$ is isomorphic to $\G$ (Proposition~\ref{pro:1}), and then show that a diagonal-preserving graded ring-isomorphism between $A_R(\G_1)$ and $A_R(\G_2)$ induces a graded isomorphism between $W_{\G_1}$ and $W_{\G_2}$ (Proposition~\ref{pro:3}). Since we are working with isomorphisms that are not necessarily $*$-isomorphisms, and since we also need to recover the grading of $\G$ given by the cocycle $c$, we use a definition of normalisers similar to the one used in \cite{ABHS} instead of the definition used in \cite{BCH}.

\subsection{The normalisers of $D_R(\G)$}

In the following, $R$ is a commutative integral domain with identity, $\Gamma$ is a discrete group, and $(\G,c)\in\cls$.

\begin{definition}
A \emph{normaliser} of $D_R(\G)$ in $A_R(\G)$ is a pair $(m,n) \in A_R(\G) \times A_R(\G)$ satisfying $m D_R(\G) n \bigcup n D_R(\G) m \subseteq D_R(\G)$ and $mnm=m$ and $nmn=n$. We denote the set of normalisers $N_R(\G)$ and let $N_R(\G)_g:=N_R(\G)\cap (A_R(\G)_g\times A_R(\G)_{g^{-1}})$ for $g\in\Gamma$.
\end{definition}

Since $D_R(\G)=\operatorname{span}_R\{1_U:U \subseteq \G^0 \text{ is compact open} \}$, it is straightforward to check that if $A$ is a compact open bisection of $\G$, then $(1_A,1_{A^{-1}})\in N_R(\G)$. If in addition $c(A)=\{g\}$, then $(1_A,1_{A^{-1}})\in N_R(\G)_g$.

\begin{lemma}\label{lem:1}
	Let $g\in\Gamma$ and suppose $(m,n)\in N_R(\G)_g$. Then
	\begin{enumerate}
		\item $mn=1_{s(\supp(n))}$ and $nm=1_{s(\supp(m))}$.
		\item $\supp(m)$ is a compact open bisection contained in $c^{-1}(g)$.
		\item $\supp(n)=\supp(m)^{-1}$.
	\end{enumerate}
\end{lemma}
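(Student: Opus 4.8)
The plan is to reduce all three assertions to statements about the supports of $m$ and $n$, the genuinely hard point being that $\supp(m)$ is a bisection; the other two parts are bookkeeping with a pair of diagonal idempotents.

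First I would show that $mn$ and $nm$ lie in $D_R(\G)$. Since $n$ has compact support and $\G$ is ample, $r(\supp(n))$ is a compact open subset of $\G^0$, so $1_{r(\supp(n))}\in D_R(\G)$ and $1_{r(\supp(n))}n=n$; hence $mn=m\,1_{r(\supp(n))}\,n\in mD_R(\G)n\subseteq D_R(\G)$, and symmetrically $nm\in D_R(\G)$. The relations $mnm=m$ and $nmn=n$ give $(mn)^2=(mnm)n=mn$ and likewise $(nm)^2=nm$, so $mn,nm$ are idempotents of $D_R(\G)=A_R(\G^0)$; as $R$ is an integral domain, pointwise squaring forces every idempotent of $A_R(\G^0)$ to be $1_K$ for a compact open $K\subseteq\G^0$, so $mn=1_{K_1}$ and $nm=1_{K_2}$. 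Convolution by a function supported on $\G^0$ acts by restriction, so $1_{K_1}m=m$ (from $mnm=m$) gives $r(\supp(m))\subseteq K_1$, $m1_{K_2}=m$ gives $s(\supp(m))\subseteq K_2$, and symmetrically $s(\supp(n))\subseteq K_1$, $r(\supp(n))\subseteq K_2$. Conversely, if $y\in K_1$ then $1=(mn)(y)=\sum_{\alpha\in\G^y}m(\alpha)n(\alpha^{-1})$, so $n(\alpha^{-1})\neq0$ for some $\alpha\in\G^y$, whence $y=s(\alpha^{-1})\in s(\supp(n))$; thus $K_1=s(\supp(n))$ and $mn=1_{s(\supp(n))}$, and the mirror computation gives $nm=1_{s(\supp(m))}$, proving (1).

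The containment $\supp(m)\subseteq c^{-1}(g)$ in (2) is immediate from $m\in A_R(\G)_g$. For the bisection property I would show $s$ and $r$ are injective on $\supp(m)$. Since $\supp(m)$ is compact open and $r,s$ are local homeomorphisms, it suffices to prove injectivity of $s$ on each fibre over a point of $X$ (and symmetrically for $r$): if two distinct points of $\supp(m)$ shared a source, disjoint compact open bisection neighbourhoods inside $\supp(m)$ would have open, overlapping source-images, which meet the dense set $X$, producing two distinct arrows with a common source in $X$. So fix $x\in X$ and let $F=\supp(m)\cap\G_x$, a finite set. The plan is to extract relations by inserting $1_U$ into $mD_R(\G)n\subseteq D_R(\G)$ and $nD_R(\G)m\subseteq D_R(\G)$ for compact open $U$ shrinking to $\{x\}$: because the products land in $D_R(\G)$, their values off $\G^0$ vanish, giving $\sum_{\alpha\in F,\,r(\alpha)=r(\zeta)}m(\alpha)n(\alpha^{-1}\zeta)=0$ for each $\zeta\notin\G^0$, while $(nm)(x)=\sum_{\alpha\in F}n(\alpha^{-1})m(\alpha)=1$ since $x\in K_2$.

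These relations split according to the ranges of the arrows in $F$. If two elements of $F$ share a range, the grading places the associated isotropy elements in the kernel coset, and the localised identities become identities in $R\G_x^x$; translating by a representative $h_0$ with $c(h_0)=g$ — legitimate because $H_x:=c^{-1}(e)\cap\G_x^x=\ker(c|_{\G_x^x})$ is normal in $\G_x^x$ — turns them into identities in $RH_x$, where the hypothesis that $RH_x$ has no zero divisors and only trivial units forces the $m$-part of such a coincidence to be a single monomial, excluding same-source/same-range coincidences. Having done so, distinct $\alpha,\alpha'\in F$ have distinct ranges, so $\zeta=\alpha\alpha'^{-1}\notin\G^0$; localising the off-diagonal relation so that $\alpha$ is the unique source-$x$ arrow of its range yields $m(\alpha)n(\alpha'^{-1})=0$, hence $n(\alpha'^{-1})=0$ as $R$ is a domain. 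Ranging over pairs kills every term of $\sum_{\alpha\in F}n(\alpha^{-1})m(\alpha)=1$ unless $|F|=1$; the same argument with $r,s$ (and $m,n$) interchanged gives injectivity of $r$, so $\supp(m)$ is a bisection. Finally, for (3) write $B:=\supp(m)$: for $x\in s(B)=K_2$ the unique $\eta\in B$ with $s(\eta)=x$ gives $(nm)(x)=n(\eta^{-1})m(\eta)=1$, so $\eta^{-1}\in\supp(n)$ and $B^{-1}\subseteq\supp(n)$; applying this to $(n,m)\in N_R(\G)_{g^{-1}}$ gives the reverse inclusion, so $\supp(n)=\supp(m)^{-1}$. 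I expect the bisection step to be the main obstacle: arranging the localisation so that the normalising identities descend to genuine relations in the single group ring $RH_x$, and combining the group-ring input (which only sees equal-range coincidences) with the integral-domain argument that removes the remaining same-source/different-range coincidences.
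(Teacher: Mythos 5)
Your proposal is correct and follows essentially the same route as the paper: part (1) via idempotents in $D_R(\G)$ over an integral domain, part (2) by localising the identities $m1_Un\in D_R(\G)$ near a point $x$ of the dense set $X$, transporting the resulting convolution relations into the group-ring $R(c^{-1}(e)\cap\G_x^x)$ by a fixed translating arrow, and invoking triviality of units together with the no-zero-divisor/integral-domain property to rule out coincidences, then using density of $X$ to get injectivity of $r$ and $s$ everywhere; part (3) then follows as in the paper. The only differences are cosmetic (you shrink neighbourhoods of $x$ where the paper uses an explicit finite partition of $r(\supp(n))$, and you work with $\supp(m)\cap s^{-1}(x)$ where the paper works with $\supp(n)\cap s^{-1}(x)$).
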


\begin{proof}
	(1): Since $m,n\in A_R(\G)$, it follows that both $\supp(m)$ and $\supp(n)$ are compact open. Thus, $1_{s(\supp(m))\cup r(\supp(n))}\in D_R(\G)$. It follows that $mn=m1_{s(\supp(m))\cup r(\supp(n))}n\in D_R(\G)$. Since $(mn)^2=mn$ and $R$ is an integral domain, it follows that $mn=1_U$ for some compact open subset $U$ of $\G^0$. Since $mn(\eta)=0$ if $s(\eta)\notin s(\supp(n))$ or $r(\eta)\notin r(\supp(m))$, it follows that $U\subseteq s(\supp(n))\cap r(\supp(m))$. Conversely, since $n=nmn=n1_U$ and $m=mnm=1_Um$, it follows that $s(\supp(n))\cup r(\supp(m))\subseteq U$. Thus, $mn=1_{s(\supp(n))}=1_{r(\supp(m))}$. That $nm=1_{s(\supp(m))}=1_{r(\supp(n))}$ can be proved similarly.
	
	(2): Using arguments similar to those used in the previous paragraph, one can show that if $U$ is a compact open subset of $s(\supp(m))=r(\supp(n))$, then $m1_Un=1_{U'}$ where $U'=s(\supp(n)\cap r^{-1}(U))=r(\supp(m)\cap s^{-1}(U))$.
	
	Since $\supp(n)$ is a union of a finite number of compact open bisections, it follows that there is a finite set $F$ of mutually disjoint compact open subsets of $r(\supp(n))$ such that $\bigcup_{U\in F}U=r(\supp(n))$, and such that $r(s^{-1}(\{x\})\cap\supp(n))\cap U$ consist of at most one point for each $x\in s(\supp(n))$ and each $U\in F$.
	
	Let $x\in r(\supp(m))=s(\supp(n))$ and suppose the group-ring $R(c^{-1}(e)\cap\G_{x}^{x})$ has no zero-divisors and only trivial units. Since $\sum_{U\in F}m1_Un=m1_{s(\supp(m))}n=mn=1_{s(\supp(n))}$, it follows that there is a unique $U_x\in F$ such that $x\in s(\supp(n)\cap r^{-1}(U_x))$. Let $y$ be the unique point in $r(s^{-1}(\{x\})\cap\supp(n))\cap U_x$. Let $E_n:=\{\eta\in\supp(n):s(\eta)=x,\ r(\eta)=y\}$, let $E_m:=\{\zeta\in\supp(m):s(\zeta)=y,\ r(\zeta)=x\}$ and choose an $\eta_0\in E_n$. Then $\eta_0^{-1}\eta, \zeta\eta_0\in c^{-1}(e)\cap\G_{x}^{x}$ for $\eta\in E_n$ and $\zeta\in E_m$. It follows that $a:=\sum_{\eta\in E_n}n(\eta)\eta_0^{-1}\eta$ and $b:=\sum_{\zeta\in E_m}m(\zeta)\zeta\eta_0$ belong to the group-ring  $R(c^{-1}(e)\cap\G_{x}^{x})$. Since 
	\begin{equation*}
		\sum_{\eta\in E_n,\ \zeta\in E_m,\ \eta\zeta=x}m(\zeta)n(\eta)
		=m1_{U_x}n(x)=1
	\end{equation*}
	and 
	\begin{equation*}
		\sum_{\eta\in E_n,\ \zeta\in E_m,\ \eta\zeta=\rho}m(\zeta)n(\eta)
		=m1_{U_x}n(\rho)=0
	\end{equation*}
	for $\rho\in \G_{x}^{x}\setminus\{x\}$, it follows that 
	\begin{equation*}
		ba=\left(\sum_{\zeta\in E_m}m(\zeta)\zeta\eta_0\right) \left(\sum_{\eta\in E_n}n(\eta)\eta_0^{-1}\eta\right)
		=\sum_{\eta\in E_n,\ \zeta\in E_m}m(\zeta)n(\eta)\zeta\eta=x.
	\end{equation*}
	Thus, $b$ is the inverse of $a$ in $R(c^{-1}(e)\cap\G_{x}^{x})$. Since $R(c^{-1}(e)\cap\G_{x}^{x})$ only has trivial units, it follows that there is an $\eta_n\in E_n$ and an $\zeta_m\in E_m$ such that $n(\eta)=m(\zeta)=0$ for all $\eta\in E_n\setminus\{\eta_n\}$ and all $\zeta\in E_m\setminus\{\zeta_m\}$, and $\eta_n^{-1}=\zeta_m$. Thus, $\eta_n$ is the only $\eta\in\supp(n)$ such that $s(\eta)=x$ and $r(\eta)\in U_x$, and  $\zeta_m$ is the only $\zeta\in\supp(m)$ such that $r(\zeta)=x$ and $s(\eta)\in U_x$. A similar argument, using that $R(c^{-1}(e)\cap\G_{x}^{x})$ has no zero-divisors, shows that if $U\in F\setminus\{U_x\}$, then there is no $\eta\in\supp(n)$ such that $s(\eta)=x$ and $r(\eta)\in U$, and no $\zeta\in\supp(m)$ such that $r(\zeta)=x$ and $s(\eta)\in U$.
	
	We have shown that if $x\in r(\supp(m))=s(\supp(n))$ and the group-ring $R(c^{-1}(e)\cap\G_{x}^{x})$ has no zero-divisors and only trivial units, then there is a unique $\eta\in\supp(n)$ such that $s(\eta)=n$, and a unique $\zeta\in\supp(m)$ such that $r(\zeta)=x$. Since there is a dense subset $X \subseteq \G^0$ such that the group-ring $R(c^{-1}(e)\cap\G_x^x)$ has no zero-divisors and only trivial units for all $x \in X$, and since $\supp(m)$ and $\supp(n)$ are each the union of a finite number of compact open bisections, it follows that for every $x\in r(\supp(m))=s(\supp(n))$ there is a unique $\eta\in\supp(n)$ such that $s(\eta)=n$, and a unique $\zeta\in\supp(m)$ such that $r(\zeta)=x$. That is, the restriction of $r$ to $\supp(m)$ and the restriction of $s$ to $\supp(n)$ are injective. In a similar way one can show that the restriction of $s$ to $\supp(m)$ and the restriction of $r$ to $\supp(n)$ are injective. Thus, $\supp(m)$ and $\supp(n)$ are compact open bisections contained in $c^{-1}(g)$ and $c^{-1}(g^{-1})$, respectively.
	
	(3): Since $mn=1_{s(\supp(n))}=1_{r(\supp(m))}$ and $nm=1_{s(\supp(m))}=1_{r(\supp(n))}$, and since $\supp(m)$ and $\supp(n)$ are bisections, it follows that $\supp(n)=\supp(m)^{-1}$.
	\end{proof}
	
\subsection{The groupoid $W_\G$} For a commutative integral domain $R$ with identity, a discrete group $\Gamma$, and $(\G,c)\in\cls$, we construct from $A_R(\G)$ and $D_R(\G)$, a graded groupoid $W_\G$ which is isomorphic to $\G$.

If $A \subseteq \G$ is an open bisection, then we let $\alpha_A$ denote the homeomorphism from $s(A)$ to $r(A)$ given by $\alpha_A(s(\eta))=r(\eta)$ for $\eta\in A$.

\begin{proposition}\label{pro:1}
	Let $R$ be a commutative integral domain with identity, $\Gamma$ a discrete group, and $(\G,c)\in\cls$.
	\begin{enumerate}
		\item\label{pro:1equivrel} There is an equivalence relation $\sim$ on \[ \{((m,n),g,x):g \in \Gamma, (m,n)\in N_R(\G)_g,\ x \in s(\supp(m))\} \] such that $((m_1,n_1),g_1,x_1)\sim ((m_2,n_2),g_2,x_2)$ if and only if $x_1=x_2$ and there is a compact open neighbourhood $U\subseteq\G^0$ of $x_1$ such that $m_11_Un_2\in D_R(\G)$.
		\item\label{pro:1bijection} Define \[ W_\G := \{((m,n),g,x):g \in \Gamma, (m,n)\in N_R(\G)_g,\ x \in s(\supp(m))\} / \sim.\] There is a bijection $\psi_\G:W_\G\to\G$ such that $\psi_\G([((m,n),g,x)])\in\supp(m)\cap s^{-1}(\{x\})$ for $g\in G$, $(m,n)\in N_R(\G)_g$ and $x \in s(\supp(m))$, and such that 
$$\psi_\G([((1_A,1_{A^{-1}}),c(\eta),s(\eta))])=\eta$$ 
for $\eta\in\G$ and any compact open bisection $A$ that contains $\eta$.
		\item\label{pro:1inverse} Let $[((m,n),g,x)] \in W_\G$. Then \[ \psi_\G([((m,n),g,x)])^{-1}=\psi_\G([(n,m),g^{-1},\alpha_{\supp(m)}(x))]).\]
		\item\label{pro:1mult} For $i=1,2$ let $[((m_i,n_i),g_i,x_i)] \in W_\G$ and  $\eta_i:=\psi_\G([((m_i,n_i),g_i,x_i)])$. Then $\eta_1$ and $\eta_2$ are composable if and only if $x_1=\alpha_{\supp(m_2)}(x_2)$, in which case $\eta_1 \eta_2=\psi_\G([((m_1m_2,n_2n_1),g_1g_2,x_2)]).$
		\item\label{pro:1cocycle} Let $[((m,n),g,x)] \in W_\G$. Then $c(\psi_\G([((m,n),g,x)]))=g$.
		\item\label{pro:1preimage} Let $g\in\Gamma$ and suppose that $A\subseteq c^{-1}(g)$ is a compact open bisection. Then $\psi_\G^{-1}(A)=\{[((m,n),g,x)] \in W_\G:\supp(m)\subseteq A\}$.
	\end{enumerate}
\end{proposition}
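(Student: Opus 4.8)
The plan is to build everything on a single auxiliary map. For a triple $((m,n),g,x)$ as in the statement, Lemma~\ref{lem:1}(2) tells us that $\supp(m)$ is a compact open bisection contained in $c^{-1}(g)$, so there is a \emph{unique} $\eta\in\supp(m)$ with $s(\eta)=x$; write $\theta((m,n),g,x)$ for this element. This $\theta$ is my candidate for $\psi_\G$, and the whole proposition will follow once I identify $\sim$ with the relation ``$\theta$ takes the same value''. Concretely, the key claim is: \emph{if $x_1=x_2=x$, then there is a compact open $U\ni x$ with $m_11_Un_2\in D_R(\G)$ if and only if $\theta((m_1,n_1),g_1,x)=\theta((m_2,n_2),g_2,x)$.}

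For the key claim I would compute $f:=m_11_Un_2$ pointwise. Since $1_U$ is supported on units, $m_11_U$ is $m_1$ restricted to source in $U$, and then $f(\rho)=\sum m_1(\alpha)n_2(\gamma)$ over factorisations $\rho=\alpha\gamma$ with $\alpha\in\supp(m_1)$, $\gamma\in\supp(n_2)$ and $s(\alpha)\in U$. By Lemma~\ref{lem:1}(3), $\supp(n_2)=\supp(m_2)^{-1}$, so each such $\gamma$ is $\delta^{-1}$ with $\delta\in\supp(m_2)$ and $s(\alpha)=s(\delta)$; because $\supp(m_1),\supp(m_2)$ are bisections there is at most one $\alpha$ and one $\delta$ over each source point, and since $r$ is injective on $\supp(m_1)$ the products $\alpha\delta^{-1}$ are distinct for distinct source points, so there is no cancellation. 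Now $\alpha\delta^{-1}\in\G^0$ forces $r(\alpha)=r(\delta)$, and combined with $s(\alpha)=s(\delta)$ this gives $\alpha=\delta$; conversely if $\alpha=\delta$ the product is the unit $r(\alpha)$. Hence $f\in D_R(\G)$ exactly when, for every source point $x'\in U$ lying in both $s(\supp(m_1))$ and $s(\supp(m_2))$, the unique elements of $\supp(m_1)$ and $\supp(m_2)$ over $x'$ coincide. Taking $x'=x$ gives the forward implication; for the converse I would take $U=s(\supp(m_1)\cap\supp(m_2))$, a compact open neighbourhood of $x$ on which the two sections agree by the bisection property. This pointwise bookkeeping---tracking cancellations and distinguishing a nontrivial isotropy product from a unit---is the step I expect to be the main obstacle.

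Granting the key claim, parts (1) and (2) are immediate: $\sim$ is the pullback of equality along $\theta$, hence an equivalence relation, so $\psi_\G:=\theta$ descends to a well-defined, injective map $W_\G\to\G$ with $\psi_\G([((m,n),g,x)])\in\supp(m)\cap s^{-1}(\{x\})$. Surjectivity follows because, given $\eta\in\G$, continuity of $c$ into a discrete group lets me pick a compact open bisection $A\ni\eta$ with $c|_A\equiv c(\eta)$; then $(1_A,1_{A^{-1}})\in N_R(\G)_{c(\eta)}$ and $\theta((1_A,1_{A^{-1}}),c(\eta),s(\eta))=\eta$, which is also the asserted explicit formula.

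Finally, parts (3)--(6) are short computations with $\theta$. Writing $\eta=\psi_\G([((m,n),g,x)])$: part (3) uses $\supp(n)=\supp(m)^{-1}$ (Lemma~\ref{lem:1}(3)) together with $\alpha_{\supp(m)}(x)=r(\eta)$, so the unique element of $\supp(n)$ over $r(\eta)$ is $\eta^{-1}$, while $(n,m)\in N_R(\G)_{g^{-1}}$ by the symmetry of the normaliser conditions. For part (4), $\eta_1,\eta_2$ are composable iff $x_1=r(\eta_2)=\alpha_{\supp(m_2)}(x_2)$; after a routine check (via Lemma~\ref{lem:1}(1)) that $(m_1m_2,n_2n_1)\in N_R(\G)_{g_1g_2}$, I note that $\eta_1\eta_2$ factors through $\supp(m_1),\supp(m_2)$ in exactly one way, so $m_1m_2(\eta_1\eta_2)=m_1(\eta_1)m_2(\eta_2)\neq0$ and hence $\eta_1\eta_2\in\supp(m_1m_2)$ with source $x_2$, giving the formula. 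Part (5) is just $\supp(m)\subseteq c^{-1}(g)$ from Lemma~\ref{lem:1}(2). For part (6), if $\supp(m)\subseteq A$ then $\eta\in A$; conversely if $\eta\in A$ then $((1_A,1_{A^{-1}}),g,s(\eta))$ is a representative of the same class with support exactly $A$, so $\psi_\G^{-1}(A)$ is precisely the set of classes admitting a representative with $\supp(m)\subseteq A$.
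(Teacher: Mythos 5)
Your proof is correct and follows essentially the same route as the paper: both hinge on the map sending a triple to the unique element of $\supp(m)$ with source $x$ (available by Lemma~\ref{lem:1}), the same convolution computation characterising when $m_11_Un_2\in D_R(\G)$, and the same representatives $(1_A,1_{A^{-1}})$ for surjectivity and for part (6). The only difference is organisational — you fold the paper's intermediate reformulation of $\sim$ (equality of $\supp(m_1)\cap s^{-1}(U)$ and $\supp(m_2)\cap s^{-1}(U)$) and the injectivity of $\psi_\G$ into a single ``key claim'' — and your bookkeeping of the non-cancellation and of the unit condition $\alpha\delta^{-1}\in\G^0\iff\alpha=\delta$ is, if anything, slightly more careful than the paper's.
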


\begin{proof}
\eqref{pro:1equivrel} Define $\sim$ by $((m_1,n_1),g_1,x_1) \sim ((m_2,n_2),g_2,x_2)$ if and only if $x_1=x_2$ and there is a compact open neighbourhood $U\subseteq\G^0$ of $x_1$ such that 
$$\supp(m_1) \bigcap s^{-1}(U)=\supp(m_2) \bigcap s^{-1}(U).$$ 
It is easy to check that $\sim$ is an equivalence relation on 
\[ \{((m,n),g,x):g \in \Gamma, (m,n)\in N_R(\G)_g, x \in s(\supp(m))\}. \]

Suppose $(m_1,n_1), (m_2,n_2) \in N_R(\G)$, $x\in s(\supp(m_1))\cap s(\supp(m_2))$ and $U\subseteq\G^0$ is a compact open neighbourhood of $x$. Then $\supp(m_1)$ and $\supp(n_2)$ are bisections and $\supp(n_2)=\supp(m_2)^{-1}$ by Lemma~\ref{lem:1}. Since
\begin{equation*}
	m_11_Un_2(\eta)=\sum_{\begin{subarray}{l}s(\eta_1)=r(\eta_2)\in U\\ \eta_2\eta_1=\eta\end{subarray}}m_1(\eta_1)n_2(\eta_2)
\end{equation*}
for all $\eta\in\G$, it follows that $m_11_Un_2\in D_R(\G)$ if and only if $\supp(m_1) \bigcap s^{-1}(U)=\supp(m_2) \bigcap s^{-1}(U)$.


\eqref{pro:1bijection} Let $(m,n) \in N_R(\G)$ and $x \in s(\supp(m))$. Then there is a unique $\eta_{(m,n),x}\in\supp(m)$ such that $s(\eta_{(m,n),x})=x$. It is clear that if $((m_1,n_1),g_1,x_1) \sim ((m_2,n_2),g_2,x_2)$, then $\eta_{(m_1,n_1),x_1}=\eta_{(m_2,n_2),x_2}$. Thus, the map $\psi_\G: W_\G \to \G$ defined by $[((m,n),g,x)]\mapsto\eta_{(m,n),x}$ is a well-defined map. 

Suppose $\eta:=\psi_\G([((m_1,n_1),g_1,x_1)])=\psi_\G([((m_2,n_2),g_2,x_2)])$. Then $x_1=s(\eta)=x_2$ and $U:=\{\eta_1^{-1}\eta_2:\eta_1\in\supp(m_1),\ \eta_2\in\supp(m_2)\}\cap\G^0$ is a compact open neighbourhood of $x_1$ such that $\supp(m_1) \bigcap s^{-1}(U)=\supp(m_2) \bigcap s^{-1}(U)$, so $((m_1,n_1),g_1,x_1) \sim ((m_2,n_2),g_2,x_2)$. This shows that $\psi_\G$ is injective.



Suppose $\eta\in\G$. Let $A \subseteq c^{-1}(c(\eta))$ be a compact open bisection containing $\eta$. Then $\psi_\G([((1_A,1_{A^{-1}}),c(\eta),s(\eta))])=\eta$. This shows that $\psi_\G$ is also surjective.

\eqref{pro:1inverse} Let $\eta:=\psi_\G([((m,n),g,x)])$. Then $x = s(\eta)$ and $\alpha_{\supp(m)}(x)=r(\eta)$, so 
\begin{align*} 
	\psi_\G([((n,m), g^{-1},\alpha_{\supp(m)}(x))]) &= \psi_\G([((n,m),g^{-1},r(\eta))]) \\ &=\psi_\G([((n,m),g^{-1},s(\eta^{-1}))]) \\
&=\eta^{-1}.
\end{align*}

\eqref{pro:1mult} We have that $\eta_1,\eta_2 \in \G$ are composable if and only if $x_1 = s(\eta_1)=r(\eta_2)=\alpha_{\supp(m_2)}(x_2)$. We also have that 
$$\eta_1\eta_2\in\supp(m_1m_2) \subseteq \supp(m_1)\supp(m_2) \subseteq c^{-1}(g_1)c^{-1}(g_2)=c^{-1}(g_1 g_2),$$ 
and that $x_2 = s(\eta_2) =s(\eta_1\eta_2)$. It follows that $[((m_1m_2,n_2n_1),g_1g_2,x_2)] \in W_\G$ and $\psi_\G([((m_1m_2,n_2n_1),g_1g_2,x_2)])=\eta_1 \eta_2$.

\eqref{pro:1cocycle} Let $\eta:=\psi_\G([((m,n),g,x)])$. Then $\eta\in\supp(m) \subseteq c^{-1}(g)$, so 
$$c(\psi_\G([((m,n),g,s(\eta))]))= c(\eta) =g.$$

\eqref{pro:1preimage} Since $\psi_\G([((m,n),g,x)])\in\supp(m)$, it follows that $\psi_\G(\{[((m,n),g,x)] \in W_\G:\supp(m)\subseteq A\})\subseteq A$. Conversely, if $\eta\in A$, then $\psi_\G([(1_A,1_A^{-1}),g,s(\eta)])=\eta$, and since $\supp(1_A)=A$, this shows that $A\subseteq \psi_\G(\{[((m,n),g,x)] \in W_\G:\supp(m)\subseteq A\})$.
\end{proof}

\begin{remark}
If follows from Proposition~\ref{pro:1} that if we equip $W_\G$ with a partially-defined product given by 
$$[((m_1,n_1),g_1,x_1)][((m_2,n_2),g_2,x_2)]=[((m_1m_2,n_2n_1),g_1g_2,x_2)]$$ 
if $x_1=\alpha_{\supp(m_2)}(x_2)$, an inverse operation given by 
$$[((m,n),g,x)]^{-1}=[(n,m),g^{-1},\alpha_{\supp(m)}(x)],$$ 
a $\Gamma$-grading 
$$[((m,n),g,x)]\mapsto g,$$ 
and the topology generated by subsets of the form $\{[((m,n),g,x)] \in W_\G:\supp(m)\subseteq A\}$ where $A\subseteq c^{-1}(g)$ is a compact open bisection, then $W_\G$ is a $\Gamma$-graded ample Hausdorff groupoid which is isomorphic to $\G$.
\end{remark}

\subsection{A homeomorphism of unit spaces}

We now generalise \cite[Proposition 5.2]{BCH} and show that a diagonal-preserving ring-isomorphism of Steinberg algebras induces a homeomorphism of the unit spaces of the groupoids.

Let $R$ be a commutative integral domain with identity and $\G$ an ample Hausdorff groupoid. Then $B(\G) := \{ p \in D_R(\G): p^2 = p\}$ is a Boolean algebra with $p \vee q = p + q - pq$, $p \wedge q = pq$ and $p \le q$ if and only if $pq = p$. Recall that a \emph{filter} of a Boolean algebra $B$ is a subset $U$ of $B$ such that $p,q\in U\implies p\wedge q\in U$ and $p\in U$, $p\le q\implies q\in U$, and that an \emph{ultra filter} is a filter $U$ that does not contain the zero element of $B$ and has the property that $U\subseteq U'\implies U=U'$ for any other filter $U'$ that does not contain the zero element. Let $B(\G)^* := \{U \subseteq B(\G): U \text{ is an ultrafilter} \}$ equipped with the topology generated by the subbasic open sets $N_p := \{ U \in B(\G)^*: p \in U\}$ for $p \in B(\G)$. 

\begin{proposition}
Let $R$ be a commutative integral domain with identity and let $\G$ be an ample Hausdorff groupoid. Define $\rho_\G: \G^0 \to B(\G)^*$ by \[\rho_\G(x) := \{1_L \in D_R(\G): L \text{ is a compact open neighbourhood of } x \}.\] Then $\rho_\G$ is a homeomoprhism with inverse $\rho_\G^{-1}$ taking $U \in B(\G)^*$ to the unique element of $\bigcap_{1_L \in U} L$.
\end{proposition}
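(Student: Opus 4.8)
The plan is to show that $\rho_\G$ is a well-defined bijection onto the ultrafilter space and that it is a homeomorphism by exhibiting a continuous inverse. First I would verify that $\rho_\G(x)$ is genuinely an ultrafilter in $B(\G)$ for each $x\in\G^0$. The set $\{1_L : L \text{ compact open nbhd of } x\}$ is closed under meets since $1_{L_1}\wedge 1_{L_2}=1_{L_1\cap L_2}$ and $L_1\cap L_2$ is again a compact open neighbourhood of $x$ (using that $\G^0$ has a basis of compact open sets because $\G$ is ample); it is upward closed because if $1_L\le 1_M$, meaning $1_L 1_M=1_L$, then $L\subseteq M$, so $M$ is also a neighbourhood of $x$. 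Here I would use the fact, stated in the excerpt, that when $R$ is an integral domain $D_R(\G)=\operatorname{span}_R\{1_U:U\subseteq\G^0\text{ compact open}\}$, so that every idempotent $p\in B(\G)$ is of the form $1_K$ for a compact open $K\subseteq\G^0$; this identifies $B(\G)$ with the Boolean algebra of compact open subsets of $\G^0$ under $\cup,\cap$, and reduces all the Boolean-algebra computations to set-theoretic ones. To see $\rho_\G(x)$ is an \emph{ultra}filter rather than merely a proper filter, I would check maximality directly: given any $p=1_K\in B(\G)$ not in $\rho_\G(x)$, we have $x\notin K$, so the complement data let us find a compact open neighbourhood $L$ of $x$ with $L\cap K=\varnothing$, whence $1_L\wedge 1_K=0$; thus $\rho_\G(x)$ cannot be properly extended without admitting the zero element.

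Next I would construct the candidate inverse and show it is well-defined. Given an ultrafilter $U\in B(\G)^*$, consider the family $\{L : 1_L\in U\}$ of compact open subsets of $\G^0$; by the filter property this family has the finite intersection property and consists of compact sets, so $\bigcap_{1_L\in U} L$ is nonempty. The key point is that this intersection is a \emph{single} point. I would argue that if two distinct points $x\ne y$ both lay in the intersection, then by the Hausdorff property of $\G^0$ together with the ample basis one can separate them by a compact open set $K$ containing $x$ but not $y$; since $U$ is an ultrafilter, either $1_K\in U$ or its complement idempotent $1_{K^c}=1\!\!\restriction$ (more precisely $1_{L\setminus K}$ for a suitable ambient $L$) lies in $U$, and either way one of $x,y$ is excluded from the intersection, a contradiction. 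I would also confirm that the resulting point actually belongs to every $L$ with $1_L\in U$, which follows from compactness, and that $\rho_\G^{-1}(\rho_\G(x))=x$ and $\rho_\G(\rho_\G^{-1}(U))=U$, the latter using ultrafilter maximality to recover the full collection of neighbourhoods.

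Finally I would establish the topological statements. For continuity and openness it suffices to track the subbasic sets $N_p=N_{1_K}$. I would show $\rho_\G^{-1}(N_{1_K})=K$: indeed $x\in\rho_\G^{-1}(N_{1_K})$ iff $1_K\in\rho_\G(x)$ iff $K$ is a compact open neighbourhood of $x$ iff $x\in K$ (using that $K$ is open, so membership already makes it a neighbourhood). Since the sets $K$ form a basis of the topology on $\G^0$ and the sets $N_{1_K}$ generate the topology on $B(\G)^*$, this single identity gives simultaneously that $\rho_\G$ is continuous and open, hence a homeomorphism once bijectivity is known. I expect the main obstacle to be the uniqueness-of-the-intersection-point step in the inverse construction: making precise how the ultrafilter property interacts with the Hausdorff separation of $\G^0$ to force the intersection to be a singleton, in particular handling the passage to relative complements within an ambient compact open set so that one stays inside the Boolean algebra $B(\G)$ (whose top element need not be $\G^0$ itself when $\G^0$ is non-compact). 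Everything else reduces, via the integral-domain description of $D_R(\G)$, to routine manipulations with compact open subsets of $\G^0$.
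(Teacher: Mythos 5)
Your proposal is correct and follows essentially the same route as the paper, which establishes the filter/ultrafilter properties of $\rho_\G(x)$ directly and defers the singleton-intersection, surjectivity, and continuity arguments to \cite[Proposition~5.1]{BCH}; you simply fill in those deferred details yourself. The one step you leave informal --- that an ultrafilter in the generalized Boolean algebra of compact open sets must contain one of $1_{L\cap K}$, $1_{L\setminus K}$ whenever it contains $1_L$ --- is the standard prime-filter property and is exactly the right tool, so there is no gap.
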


\begin{proof}
Our proof follows the proof of \cite[Proposition~5.1]{BCH}. Since $\G$ is ample, the unit space has a basis consisting of compact open sets, so each unit has a neighbourhood basis consisting of compact open neighbourhoods.

Fix $x \in \G^0$. It is routine to check that $\rho_\G(x)$ is a filter. We check that it is an ultrafilter. Suppose $U$ is a filter such that $0 \not \in U$ and $\rho_\G(x) \subseteq U$. It suffices to show that $U \subseteq \rho_\G(x)$. Suppose $p \in U$ and let $C:=\supp(p)$. Then $1_{C\cap L}=p\wedge 1_L\in U$ and $C \cap L \ne \emptyset$ for any compact open neighbourhood $L$ of $x$. Thus $x \in C$ because $C$ is closed, so $p=1_C \in \rho_\G(x)$.

Since compact open sets separate points in $\G^0$, it follows that $\rho_\G$ is injective.
Arguments identitcal to the ones used in the proof of \cite[Proposition~5.1]{BCH} show that for $U \in B(\G)^*$, the intersection $\bigcap_{1_L \in U} L$ is a singleton $\{x\} \subseteq \G^0$, that $\rho_\G$ is surjective with inverse given by $\rho_\G^{-1}(U)=x$, and that $\rho_\G$ is continuous.

It remains to check that $\rho_\G$ is open: if $C$ is a compact open subset of $\G^0$, then $\rho_\G(C) = \{ U \in B(\G)^*: 1_C \in U \} = N_{1_C}$ is open. \end{proof}

\begin{proposition}\label{pro:2}
	Let $R$ be a commutative integral domain with identity and let $\G_1$ and $\G_2$ be ample Hausdorff groupoids. If $\phi:A_R(\G_1)\to A_R(\G_2)$ is a diagonal-preserving ring-isomorphism, then there is a homeomorphism $\phi^*:\G_2^0\to\G_1^0$ such that $f(\phi^*(x))=\phi(f)(x)$ for $f\in D_R(\G_1)$ and $x\in\G_2^0$.
\end{proposition}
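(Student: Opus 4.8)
The plan is to leverage the previous proposition, which identifies each unit space $\G_i^0$ with the Stone space $B(\G_i)^*$ of ultrafilters through the homeomorphism $\rho_{\G_i}$, and to show that $\phi$ transports one Stone space onto the other. The first step is to observe that, since $\phi$ is diagonal-preserving, its restriction to $D_R(\G_1)$ is a ring-isomorphism onto $D_R(\G_2)$. A ring-isomorphism sends idempotents to idempotents, so this restriction maps $B(\G_1)$ bijectively onto $B(\G_2)$; and because the Boolean operations $p\wedge q=pq$ and $p\vee q=p+q-pq$ and the order $p\le q\iff pq=p$ are all expressed purely in terms of the ring operations, the restriction is in fact an isomorphism of Boolean algebras $h:=\phi|_{B(\G_1)}\colon B(\G_1)\to B(\G_2)$.

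Next I would pass to Stone spaces. A Boolean-algebra isomorphism $h$ induces a map $h^*\colon B(\G_2)^*\to B(\G_1)^*$ by $h^*(U):=h^{-1}(U)=\{p\in B(\G_1):h(p)\in U\}$; since $h$ is a bijective Boolean homomorphism, $h^{-1}(U)$ is again an ultrafilter, and $h^*$ is a homeomorphism because $(h^*)^{-1}(N_p)=N_{h(p)}$ carries subbasic open sets to subbasic open sets (and symmetrically for its inverse). I would then set
\[ \phi^* := \rho_{\G_1}^{-1}\circ h^*\circ\rho_{\G_2}\colon \G_2^0\to\G_1^0, \]
which is a homeomorphism, being a composite of homeomorphisms, so that $\rho_{\G_1}(\phi^*(x))=h^{-1}(\rho_{\G_2}(x))$ for all $x\in\G_2^0$.

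It then remains to verify the intertwining relation $f(\phi^*(x))=\phi(f)(x)$. I would first check it for $f=1_C$ with $C\subseteq\G_1^0$ compact open, i.e.\ for $f\in B(\G_1)$. Writing $\phi(1_C)=1_{C'}=h(1_C)$ (every element of $B(\G_2)$ is the characteristic function of a compact open subset of $\G_2^0$ because $R$ is an integral domain), and using that $1_L\in\rho_\G(y)\iff y\in L$ for compact open $L$, one unwinds the definition of $\phi^*$ into the chain
\[ \phi^*(x)\in C \iff 1_C\in\rho_{\G_1}(\phi^*(x)) \iff h(1_C)\in\rho_{\G_2}(x) \iff x\in C', \]
whence $1_C(\phi^*(x))=1_{C'}(x)=\phi(1_C)(x)$.

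The main obstacle is the passage from idempotents to an arbitrary $f\in D_R(\G_1)$. Here I would argue locally: put $y:=\phi^*(x)$ and $r:=f(y)$, choose a compact open neighbourhood $L$ of $y$ on which $f\equiv r$ (so $f1_L=r1_L$), and set $1_{L'}:=\phi(1_L)$, noting $x\in L'$ by the idempotent case. Applying $\phi$ and evaluating at $x$ yields $\phi(f)(x)=\phi(f)(x)1_{L'}(x)=\phi(f1_L)(x)=\phi(r1_L)(x)$, reducing the whole statement to the identity $\phi(r1_L)(x)=r$. I expect this last point to be the genuinely delicate step, since it is exactly where one must control how $\phi$ acts on scalar multiples of idempotents: a bare ring-isomorphism of the diagonal is not a priori $R$-linear. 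The natural way to attack it is to work inside the unital corner $1_L D_R(\G_1)1_L\cong \mathrm{LC}(L,R)$, on which $\phi$ restricts to a unital ring-isomorphism onto $1_{L'}D_R(\G_2)1_{L'}$, and to identify the image of the constant function $r1_L$ there; once $\phi(r1_L)(x)=r$ is secured, the relation $f(\phi^*(x))=\phi(f)(x)$ follows for all $f\in D_R(\G_1)$ by additivity, completing the proof.
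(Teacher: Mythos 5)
Your construction of $\phi^*$ is exactly the paper's: restrict $\phi$ to the Boolean algebra $B(\G_1)$ of idempotents of the diagonal, pass to Stone spaces, and conjugate by the homeomorphisms $\rho_{\G_i}$. Your verification that $1_C(\phi^*(x))=\phi(1_C)(x)$ for compact open $C\subseteq\G_1^0$ is also correct, and this idempotent case is in fact the only instance of the displayed identity that the paper uses later (in Proposition~\ref{pro:3} it is applied solely to indicator functions $1_U$). The paper itself does not write out the extension beyond idempotents; it delegates it to \cite[Proposition~5.2 and Lemma~5.3]{BCH}.

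However, the step you flag as ``genuinely delicate,'' namely proving $\phi(r1_L)(x)=r$, is not merely delicate: it is false for a bare ring-isomorphism, so the identity $f(\phi^*(x))=\phi(f)(x)$ cannot be extended verbatim from idempotents to all of $D_R(\G_1)$. Take $R=\C$, let $\G_1=\G_2=\G$ be any ample Hausdorff groupoid (even a single unit), and let $\phi(f)=\sigma\circ f$ where $\sigma$ is complex conjugation; this is a diagonal-preserving ring-automorphism of $A_R(\G)$ fixing every idempotent, so $\phi^*=\operatorname{id}$, yet $\phi(f)(x)=\overline{f(x)}\ne f(\phi^*(x))$ whenever $f$ takes non-real values. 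Your proposed corner argument cannot repair this: a unital ring-isomorphism $1_LD_R(\G_1)1_L\to 1_{L'}D_R(\G_2)1_{L'}$ only controls the image of the prime subring $\Z\cdot 1_L$, since $\phi$ is not assumed $R$-linear. What is actually provable --- and what \cite[Lemma~5.3]{BCH} establishes, and what suffices for Proposition~\ref{pro:3} --- is the weaker statement $\supp(\phi(f))=(\phi^*)^{-1}(\supp(f))$, equivalently $f(\phi^*(x))\ne 0$ if and only if $\phi(f)(x)\ne 0$; this follows from your idempotent computation together with the observation that, $R$ being an integral domain, $\supp(\phi(r1_U))=\supp(\phi(1_U))$ for $r\ne 0$ (if $\phi(r1_U)1_W=0$ for some nonempty compact open $W\subseteq\supp(\phi(1_U))$, apply $\phi^{-1}$ to get $r1_{W''}=0$ with $W''$ nonempty, a contradiction). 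In short, your proof is complete exactly for the part of the proposition the paper needs, and the residual step you correctly isolated reflects an overstatement in the proposition itself rather than a gap you could have closed.
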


\begin{proof}
The isomorphism $\phi$ restricts to an isomorphism from the Boolean algebra $B(\G_1)$ onto $B(\G_2)$, and induces a homeomorphism $\tilde \phi: B(\G_2)^* \to B(\G_1)^*$ given by $\tilde \phi(U) := \phi^{-1}(U)$ for $U \in B(\G_2)^*$. Define $\phi^* := \rho_{\G_1}^{-1} \circ \tilde \phi \circ \rho_{\G_2}$. Then $\phi^*$ is a homeomorphism from $\G_2$ onto $\G_1$ such that $\phi^*(x)$ is the unique element of $\bigcap_{1_M \in \tilde \phi (\rho_{\G_2}(x))} M$.

Arguments identical to the ones used to prove \cite[Proposition~5.2 and Lemma~5.3]{BCH} show that $\phi^*$ satisfies the desired property.
\end{proof}

\subsection{Proof of Theorem~\ref{thm:steinberg}} To prove Theorem~\ref{thm:steinberg}, we use Proposition~\ref{pro:2} to show that a diagonal-preserving graded ring-isomorphism of Steinberg algebras $A_R(\G_1) \cong A_R(\G_2)$ induces a graded isomorphism of the groupoids $W_{\G_1}$ and $W_{\G_2}$.

\begin{proposition}\label{pro:3}
	Let $R$ be a commutative integral domain with identity, $\Gamma$ a discrete group, and $(\G_1,c_1), (\G_2,c_2) \in \cls$. If $\phi:A_R(\G_1)\to A_R(\G_2)$ is a diagonal-preserving graded ring-isomorphism, then there is a $\Gamma$-graded isomorphism $\Phi: W_{\G_1} \to W_{\G_2}$ such that 
$$\Phi([((m,n),g,x)])=[((\phi(m),\phi(n)),g,(\phi^*)^{-1}(x))]$$ 
for $g\in\Gamma$, $(m,n)\in N_R(\G_1)_g$ and $x\in\supp(m)$.
\end{proposition}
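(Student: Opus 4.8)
The plan is to verify directly that the proposed formula for $\Phi$ gives a well-defined, bijective, $\Gamma$-graded groupoid homomorphism, using the explicit description of $W_{\G_i}$ and the map $\psi_{\G_i}$ from Proposition~\ref{pro:1} together with the unit-space homeomorphism $\phi^*$ from Proposition~\ref{pro:2}. First I would check that $\Phi$ is well-defined. The two ingredients are that $\phi$ sends $N_R(\G_1)_g$ into $N_R(\G_2)_g$, and that it respects the equivalence relation $\sim$. For the first: if $(m,n)\in N_R(\G_1)_g$, then $\phi(m)\in A_R(\G_2)_g$ and $\phi(n)\in A_R(\G_2)_{g^{-1}}$ because $\phi$ is graded, and the normaliser identities $mnm=m$, $nmn=n$ are preserved because $\phi$ is a ring homomorphism, while $\phi(m)D_R(\G_2)\phi(n)\subseteq D_R(\G_2)$ follows from $mD_R(\G_1)n\subseteq D_R(\G_1)$ since $\phi$ is diagonal-preserving; hence $(\phi(m),\phi(n))\in N_R(\G_2)_g$. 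For the second: by Proposition~\ref{pro:1}\eqref{pro:1equivrel}, $((m_1,n_1),g,x)\sim((m_2,n_2),g,x)$ means there is a compact open neighbourhood $U$ of $x$ with $m_11_Un_2\in D_R(\G_1)$; applying $\phi$ and using that $\phi(1_U)\in D_R(\G_2)$ is the indicator of $(\phi^*)^{-1}(U)$ — a compact open neighbourhood of $(\phi^*)^{-1}(x)$ — I get $\phi(m_1)\phi(1_U)\phi(n_2)\in D_R(\G_2)$, which is exactly the condition that the images are $\sim$-equivalent in $W_{\G_2}$.

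Next I would confirm that $x\in s(\supp(m))$ forces $(\phi^*)^{-1}(x)\in s(\supp(\phi(m)))$, so that the triple on the right genuinely lies in the index set of $W_{\G_2}$; this comes from the intertwining property $f(\phi^*(y))=\phi(f)(y)$ of Proposition~\ref{pro:2} applied to $f=1_{s(\supp(m))}=nm\in D_R(\G_1)$ (using Lemma~\ref{lem:1}(1)), together with the fact that $\phi(nm)=\phi(n)\phi(m)=1_{s(\supp(\phi(m)))}$ by Lemma~\ref{lem:1}(1) applied in $\G_2$. Once $\Phi$ is well-defined, bijectivity is immediate: the same construction applied to $\phi^{-1}$ (which is also a diagonal-preserving graded ring-isomorphism, with $(\phi^{-1})^*=(\phi^*)^{-1}$) yields a two-sided inverse. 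Preservation of the grading is built into the formula, since the $\Gamma$-degree of $[((m,n),g,x)]$ is $g$ by the remark following Proposition~\ref{pro:1} and $\Phi$ carries it to a class with the same label $g$.

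It then remains to check that $\Phi$ is a groupoid homomorphism, i.e.\ that it respects units, inverses, and the partially-defined product as described in the remark after Proposition~\ref{pro:1}. For the inverse I would use $\Phi([((m,n),g,x)]^{-1})=\Phi([(n,m),g^{-1},\alpha_{\supp(m)}(x)])=[((\phi(n),\phi(m)),g^{-1},(\phi^*)^{-1}(\alpha_{\supp(m)}(x)))]$ and compare with $\Phi([((m,n),g,x)])^{-1}=[(\phi(n),\phi(m)),g^{-1},\alpha_{\supp(\phi(m))}((\phi^*)^{-1}(x))]$; these agree once I show $(\phi^*)^{-1}\circ\alpha_{\supp(m)}=\alpha_{\supp(\phi(m))}\circ(\phi^*)^{-1}$ as maps $s(\supp(m))\to r(\supp(\phi(m)))$. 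For the product, $\Phi$ sends $[((m_1m_2,n_2n_1),g_1g_2,x_2)]$ to $[((\phi(m_1)\phi(m_2),\phi(n_2)\phi(n_1)),g_1g_2,(\phi^*)^{-1}(x_2))]$, which is the product of the images provided the composability condition $x_1=\alpha_{\supp(m_2)}(x_2)$ transports correctly under $(\phi^*)^{-1}$ — again the same intertwining of $\alpha$ with $\phi^*$. Thus the crux of the whole proof is a single compatibility statement:
\begin{equation*}
(\phi^*)^{-1}\bigl(\alpha_{\supp(m)}(x)\bigr)=\alpha_{\supp(\phi(m))}\bigl((\phi^*)^{-1}(x)\bigr)\quad\text{for }x\in s(\supp(m)).
\end{equation*}
This is the step I expect to be the main obstacle, and I would establish it by a local argument: shrink to a compact open bisection $A\subseteq\supp(m)$ about the relevant point, use $1_{r(A)}=1_A1_{A^{-1}}$ and $1_{s(A)}=1_{A^{-1}}1_A$ in $D_R(\G_1)$, push these across $\phi$ (which preserves the diagonal and the relations among these idempotents), and read off from the intertwining property of $\phi^*$ in Proposition~\ref{pro:2} that $\phi^*$ conjugates $\alpha_{\supp(\phi(m))}$ back to $\alpha_{\supp(m)}$. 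Here the grading ensures the degrees match throughout. Everything else is bookkeeping with the explicit formulas, and the verification that $\Phi$ is continuous (hence, being a grading-preserving bijective groupoid homomorphism with continuous inverse, a graded isomorphism) follows from Proposition~\ref{pro:1}\eqref{pro:1preimage}, since $\Phi$ carries the basic open set indexed by a bisection $A\subseteq c_1^{-1}(g)$ to the one indexed by the bisection $\psi_{\G_2}(\Phi(\psi_{\G_1}^{-1}(A)))\subseteq c_2^{-1}(g)$.
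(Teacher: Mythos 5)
Your proposal is correct and follows essentially the same route as the paper's proof: well-definedness via $\phi(1_U)=1_{(\phi^*)^{-1}(U)}$ and $\phi(m_11_Un_2)=\phi(m_1)1_V\phi(n_2)$, bijectivity via the map induced by $\phi^{-1}$, continuity/openness via the basic open sets $Z((m,n),g,A)$, and the homomorphism property reduced to the single compatibility $(\phi^*)^{-1}(\alpha_{\supp(m)}(x))=\alpha_{\supp(\phi(m))}((\phi^*)^{-1}(x))$. The paper obtains that compatibility from the identity $m1_Un(\alpha_{\supp(m)}(x))=1_U(x)$ for compact open $U\subseteq s(\supp(m))$ (i.e.\ $m1_Un=1_{\alpha_{\supp(m)}(U)}$, pushed across $\phi$), which is exactly the intertwining computation you flag as the crux.
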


\begin{proof}
Our proof follows the proof of \cite[Theorem~6.1]{BCH}. Let $g\in\Gamma$, $(m,n)\in N_R(\G_1)_g$ and $x\in\supp(m)$. Then $(\phi(m),\phi(n))\in N_R(\G_2)_g$ since $\phi$ is a diagonal-preserving graded ring-isomorphism. Since $(\phi^*)^{-1}(s(\supp(m)))=s(\supp(\phi(m)))$, it follows that \linebreak$(\phi^*)^{-1}(x)\in s(\supp(\phi(m)))$ and thus that $[((\phi(m),\phi(n)),g,(\phi^*)^{-1}(x))]\in W_{\G_2}$.  


To see $\Phi$ is well-defined suppose $((m_1, n_1), g_1, x_1) \sim ((m_2,n_2),g_2,x_2)$. Then $x_1 = x_2$ and there is a compact open neighbourhood $U \subseteq \G_1$ of $x_1$ such that $m_1 1_U n_2 \in D_R(\G_1)$. Then $V :=(\phi^*)^{-1}(U) \subseteq \G_2^0$ is a compact open neighbourhood of $\phi(x_1)$ and $1_V(x) = 1_U(\phi^*(x)) = \phi(1_U)(x)$ for $x \in \G_2^0$ by Proposition~\ref{pro:2}, so $\phi(m_1) 1_V \phi(n_2) = \phi(m_1 1_U n_2) \in D_R(\G_2)$. Hence $((\phi(m_1), \phi(n_1)), g_1, (\phi^*)^{-1}(x_1)) \sim ((\phi(m_2), \phi(n_2)), g_2, (\phi^*)^{-1}(x_2)),$ so $\Phi$ is well-defined.

The map $[((m,n),g,x)] \mapsto [((\phi^{-1}(m),\phi^{-1}(n)),g,\phi^*(x))]$ is clearly an inverse for $\Phi$, so $\Phi$ is a bijection.

The topology on $W_\G$ is generated by basic open sets $Z((m,n),g,A) := \{[((m,n),g,x)]: \supp(m) \subseteq A \}$ for $g \in \Gamma$, $(m,n) \in N_R(\G)_g$ and $A \subseteq c^{-1}(g)$. We have $\Phi(Z((m,n),g, A)) = Z((\phi(m),\phi(n)),g, (\phi^*)^{-1}(A))$, so $\Phi$ is continous. We also have $\Phi^{-1}(Z((m,n),g,A))=Z((\phi^{-1}(m),\phi^{-1}(n)),g,\phi^*(A))$, so $\Phi$ is open. Hence $\Phi$ is a homeomorphism.

Finally, we check that $\Phi$ is a homomorphism. We have that $m 1_U n(\alpha_{\supp(m)}(x))=1_U(x)$ for any compact open subset $U$ of $s(\supp(m))$, so \[(\phi^*)^{-1}(\alpha_{\supp(m)}(x)) = \alpha_{\supp(\phi(m))}((\phi)^*)^{-1}(x).\] Using this at the fourth equality, we calculate
\begin{align*}
& \Phi([((m_1,n_1),g_1,\alpha_{\supp(m_2)}(x_2))][((m_2,n_2),g_2, x_2)]) \\
&\quad = \Phi([((m_1m_2,n_2 n_1), g_1 g_2, x_2)]) \\
&\quad =([((\phi(m_1 m_2), \phi(n_1 n_2)), g_1 g_2, (\phi^*)^{-1}(x_2)]) \\
&\quad = [((\phi(m_1), \phi(n_1)), g_1, \alpha_{\supp(\phi(m_2))}((\phi^*)^{-1}(x_2)))][((\phi(m_2), \phi(n_1)), g_2, (\phi^*)^{-1}(x_2))] \\
&\quad = [((\phi(m_1), \phi(n_1)), g_1, (\phi^*)^{-1}(\alpha_{\supp(m_2)}(x_2)))][((\phi(m_2), \phi(n_1)), g_2, (\phi^*)^{-1}(x_2))] \\
&\quad = \Phi([((m_1, n_1), g_1, \alpha_{\supp(m_2)}(x_2))])\Phi([((m_2, n_1), g_2, x_2)]). 
\end{align*}
\end{proof}

\begin{proof}[Proof of Theorem~\ref{thm:steinberg}]

$(1)\implies (2)$: Suppose that $\psi: \G_1 \to \G_2$ is an isomorphism such that $c_2 \circ \psi = c_1$. Then there is a graded $*$-algebra-isomorphism $\phi: A_R(\G_1) \to A_R(\G_2)$ defined by $\phi(f) = f \circ \psi^{-1}$. We have that $\phi(D_R(\G_1)) = D_R(\G_2)$ because $\psi(\G_1^0)=G_2^0$.

$(2)\implies (3)$: Any diagonal-preserving graded $*$-algebra-isomorphism between $A_R(\G_1)$ and $A_R(\G_2)$ is also a diagonal-preserving graded ring-isomorphism between $A_R(\G_1)$ and $A_R(\G_2)$.

$(3)\implies (1)$: Suppose that $\phi:A_R(\G_1) \to A_R(\G_2)$ is a ring-isomorphism satisfying $\phi(D_R(\G_1))=D_R(\G_2)$. Let $\Phi: W_{\G_1}\to W_{\G_2}$ be the isomorphism of Proposition~\ref{pro:3} and $\psi_{\G_1}: W_{\G_1} \to \G_1$ and $\psi_{\G_2}: W_{\G_2} \to \G_2$ be the isomorphisms from Proposition~\ref{pro:1}. Then $\psi := \psi_{\G_2} \circ \Phi \circ \psi_{\G_1}^{-1}$ is an isomorphism from $\G_1$ onto $\G_2$ such that $c_2 \circ \psi = c_1$.
\end{proof}

\subsection{Diagonal-preserving stable isomorphisms of Steinberg algebras}

We present a ``stabilised version" of Theorem~\ref{thm:steinberg} and an analogue of \cite[Corollary 4.5]{CRS}.

As in \cite{CRS}, we write $\mathcal{R}$ for the full countable equivalence relation $\N\times\N$ regarded as a discrete principal groupoid with unit space $\N$. If $\G$ is an ample Hausdorff groupoid, then the product groupoid $\G\times\mathcal{R}$ is also ample and Hausdorff. If $c$ is a continuous cocycle from $\G$ to a discrete group $\Gamma$, then we let $\bar{c}$ denote the continuous cocycle from $\G\times\mathcal{R}$ to $\Gamma$ given by $\bar{c}(\eta,\xi)=c(\eta)$. 

If $R$ is a ring with identity, then we write $M_\infty(R)$ for the ring of finitely supported, countable infinite square matrices over $R$, and $D_\infty(R)$ for the abelian subring of $M_\infty(R)$ consisting of diagonal matrices. For ample Hausdorff groupoids $\G_1$ and $\G_2$, we say that an isomorphism $\phi: A_R(\G_1) \otimes M_\infty(R) \to A_R(\G_2) \otimes M_\infty(R)$ is diagonal-preserving if $\phi(D_R(\G_1) \otimes D_\infty(R)) \to D_R(\G_2) \otimes D_\infty(R)$.

If there is a $\Gamma$-grading $\bigoplus_{g \in \Gamma} A_R(\G)_g$ of $A_R(\G)$, then we get a $\Gamma$-grading of $A_R(\G)\otimes M_\infty(R)$ by setting $(A_R(\G)\otimes M_\infty(R))_g:=A_R(\G)_g\otimes M_\infty(R)$ for $g\in\Gamma$.

\begin{theorem}\label{thm:stable}
Let $R$ be a commutative integral domain with identity, let $\Gamma$ be a discrete group, and let $(\G_1,c_1), (\G_2,c_2)\in\cls$. The following are equivalent.
\begin{enumerate}
	\item There is an isomorphism $\psi:\G_1\times\mathcal{R}\to\G_2\times\mathcal{R}$ such that $\bar{c}_2\circ\psi=\bar{c}_1$.
	\item There is a diagonal-preserving graded $*$-algebra-isomorphism between \linebreak$A_R(\G_1)\otimes M_\infty(R)$ and $A_R(\G_2)\otimes M_\infty(R)$.
	\item There is a diagonal-preserving graded ring-isomorphism between $A_R(\G_1)\otimes M_\infty(R)$ and $A_R(\G_2)\otimes M_\infty(R)$.
\end{enumerate}
\end{theorem}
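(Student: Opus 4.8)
The plan is to deduce Theorem~\ref{thm:stable} directly from Theorem~\ref{thm:steinberg}, applied to the product groupoids $\G_i\times\mathcal{R}$. The first step is to record the standard identification $A_R(\G\times\mathcal{R})\cong A_R(\G)\otimes M_\infty(R)$. Since $\mathcal{R}=\N\times\N$ is discrete, $A_R(\mathcal{R})$ consists of the finitely supported functions on $\N\times\N$, and convolution on $\mathcal{R}$ is precisely matrix multiplication, so $A_R(\mathcal{R})\cong M_\infty(R)$ with $1_{\{(i,j)\}}$ corresponding to the matrix unit $e_{ij}$. More generally, I would show that the assignment $1_{U\times\{(i,j)\}}\mapsto 1_U\otimes e_{ij}$, for $U\in S_\G$, extends to an $R$-algebra isomorphism $A_R(\G\times\mathcal{R})\to A_R(\G)\otimes M_\infty(R)$, checking that it respects convolution on the compact open bisections that span both algebras.

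Next I would verify that this identification is compatible with all three structures appearing in the theorem. Restricting to the unit space $(\G\times\mathcal{R})^0=\G^0\times\N$ shows that $D_R(\G\times\mathcal{R})$ corresponds to $D_R(\G)\otimes D_\infty(R)$, so diagonal-preserving maps correspond to diagonal-preserving maps. Because $\bar{c}^{-1}(g)=c^{-1}(g)\times\mathcal{R}$, the homogeneous component $A_R(\G\times\mathcal{R})_g$ corresponds to $A_R(\G)_g\otimes M_\infty(R)=(A_R(\G)\otimes M_\infty(R))_g$, so the identification is graded. Finally, the involution $f\mapsto f^*$ on $A_R(\G\times\mathcal{R})$ corresponds to $f\otimes A\mapsto f^*\otimes A^*$, where $A^*$ is the conjugate transpose, which is the natural $*$-structure on $A_R(\G)\otimes M_\infty(R)$. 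Under these three compatibilities, conditions (2) and (3) of Theorem~\ref{thm:stable} become exactly conditions (2) and (3) of Theorem~\ref{thm:steinberg} for the pairs $(\G_1\times\mathcal{R},\bar{c}_1)$ and $(\G_2\times\mathcal{R},\bar{c}_2)$, while condition (1) is literally condition (1) of Theorem~\ref{thm:steinberg} for these pairs.

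The remaining and essential step is to show that $(\G_i\times\mathcal{R},\bar{c}_i)\in\cls$, so that Theorem~\ref{thm:steinberg} may be invoked. Since $\mathcal{R}$ is principal, the isotropy group of $\G\times\mathcal{R}$ at $(x,n)$ is $\G_x^x\times\{(n,n)\}$, and because $\bar{c}(\eta,\xi)=c(\eta)$ we have $\bar{c}^{-1}(e)\cap(\G\times\mathcal{R})_{(x,n)}^{(x,n)}\cong c^{-1}(e)\cap\G_x^x$. Hence $R(\bar{c}^{-1}(e)\cap(\G\times\mathcal{R})_{(x,n)}^{(x,n)})\cong R(c^{-1}(e)\cap\G_x^x)$ as rings, so they share the property of having no zero-divisors and only trivial units. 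Taking the dense witnessing set $X_i\subseteq\G_i^0$ for $(\G_i,c_i)\in\cls$, the set $X_i\times\N$ is dense in $(\G_i\times\mathcal{R})^0$ and witnesses $(\G_i\times\mathcal{R},\bar{c}_i)\in\cls$. With this in hand, applying Theorem~\ref{thm:steinberg} to the product pairs and translating back through the fixed identification of the previous paragraph (which gives a bijective correspondence of isomorphisms in both directions) yields the three equivalences. I expect the only real subtlety to lie in checking that the single algebra isomorphism $A_R(\G\times\mathcal{R})\cong A_R(\G)\otimes M_\infty(R)$ simultaneously intertwines the product, the diagonal, the grading, and the involution; everything else is a direct application of the already-established Theorem~\ref{thm:steinberg}.
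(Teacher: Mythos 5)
Your proposal is correct and follows essentially the same route as the paper, which likewise reduces Theorem~\ref{thm:stable} to Theorem~\ref{thm:steinberg} via the diagonal-, grading-, and involution-compatible identification $A_R(\G_i\times\mathcal{R})\cong A_R(\G_i)\otimes M_\infty(R)$. You additionally spell out the verification that $(\G_i\times\mathcal{R},\bar{c}_i)\in\cls$ (via the isotropy groups $(\G\times\mathcal{R})_{(x,n)}^{(x,n)}\cong\G_x^x$ and the dense set $X_i\times\N$), a point the paper leaves implicit but which is indeed needed to invoke Theorem~\ref{thm:steinberg}.
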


\begin{proof}
There is a diagonal-preserving graded $*$-algebra-isomorphism between $A_R(\G_i\times\mathcal{R})$ and $A_R(\G_i)\otimes M_\infty(R)$ for $i \in \{1,2\}$. The result therefore follows from Theorem~\ref{thm:steinberg}.
\end{proof}

If $X$ is a subset of $\G^0$, then we let $\G|_X:=s^{-1}(X)\cap r^{-1}(X)$, and say that $X$ is \emph{$\G$-full} (or just \emph{full}) if $r(s^{-1}(X))=\G^0$. Two ample groupoids $\G_1$ and $\G_2$ are \emph{Kakutani equivalent} if, for $i=1,2$, there is a $\G_i$-full clopen subset $X_i\subseteq\G_i^0$ such that $\G_1|_{X_1}$ and $\G_2|_{X_2}$ are isomorphic (see \cite{CRS} and \cite{Matui}). We say that $\G_1$ and $\G_2$ are \emph{groupoid equivalent} if there is a $\G_1$--$\G_2$ equivalence in the sense of \cite[Definition 2.1]{MRW}. For a ring $A$, we denote by $M(A)$ the \emph{multiplier ring} of $A$ (see for example \cite{AP}). 

\begin{corollary}\label{cor:stable}
Let $R$ be a commutative integral domain with identity. For $i=1,2$, let $\G_i$ be an ample Hausdorff groupoid such that $\G_i^0$ is $\sigma$-compact and such that there is a dense subset $X_i \subseteq \G_I^0$ such that the group-ring $R(c_i^{-1}(e)\cap (\G_i)_x^x)$ has no zero-divisors and only trivial units for all $x \in X_i$. The following are equivalent.
\begin{enumerate}
	\item The groupoids $\G_1$ and $\G_2$ are Kakutani equivalent.
	\item There are full open sets $X_i\subseteq\G_i^0$ such that $(\G_1)|_{X_1}$ and $(\G_2)|_{X_2}$ are isomorphic.
	\item The groupoids $\G_1$ and $\G_2$ are groupoid equivalent.
	\item The groupoids $\G_1\times\mathcal{R}$ and $\G_2\times\mathcal{R}$ are isomorphic.
	\item There is a diagonal-preserving $*$-algebra-isomorphism between $A_R(\G_1)\otimes M_\infty(R)$ and $A_R(\G_2)\otimes M_\infty(R)$.
	\item There is a diagonal-preserving ring-isomorphism between $A_R(\G_1)\otimes M_\infty(R)$ and \linebreak $A_R(\G_2)\otimes M_\infty(R)$.
	\item There are projections $p_i\in M(D_R(\G_i))$ such that $p_i$ is full in $A_R(\G_i)$, and a $*$-algebra-isomorphism $\phi:p_1A_R(\G_1)p_1\to p_2A_R(\G_2)p_2$ such that $\phi(p_1D_R(\G_1))=p_2D_R(\G_2)$.
	\item There are idempotents $p_i\in M(D_R(\G_i))$ such that $p_i$ is full in $A_R(\G_i)$, and a ring-isomorphism $\phi:p_1A_R(\G_1)p_1\to p_2A_R(\G_2)p_2$ such that $\phi(p_1D_R(\G_1))=p_2D_R(\G_2)$.
\end{enumerate}
\end{corollary}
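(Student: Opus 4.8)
The plan is to organise the eight conditions into three blocks and connect them. The purely groupoid-theoretic conditions (1)--(4) I would tie together using \cite[Theorem 3.2]{CRS}: since $\G_1^0$ and $\G_2^0$ are $\sigma$-compact, that result already gives the equivalence of Kakutani equivalence, groupoid equivalence, and isomorphism of the stabilisations $\G_i\times\mathcal{R}$, that is, $(1)\Leftrightarrow(3)\Leftrightarrow(4)$. To insert (2) into this chain I would use that a clopen set is open, so $(1)\Rightarrow(2)$ is immediate, together with the standard fact that a full open subset $X\subseteq\G^0$ makes $\G|_X$ groupoid equivalent to $\G$ via the linking set $s^{-1}(X)$. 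Hence if $X_i$ are full open sets with $\G_1|_{X_1}\cong\G_2|_{X_2}$, then $\G_1\sim\G_1|_{X_1}\cong\G_2|_{X_2}\sim\G_2$ are groupoid equivalent, giving $(2)\Rightarrow(3)$ and closing $(1)\Leftrightarrow(2)\Leftrightarrow(3)\Leftrightarrow(4)$.

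The second block connects the stabilised algebras to the groupoid side. Applying Theorem~\ref{thm:stable} with $\Gamma$ the trivial group — so that the cocycles $\bar c_i$ and all gradings become trivial, and the hypothesis of the corollary is exactly $(\G_i,c_i)\in\cls[(R,\{e\})]$ — yields $(4)\Leftrightarrow(5)\Leftrightarrow(6)$ at once, with no further work.

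The third and technically most delicate block is the corner picture (7)--(8). Here I would first prove a lemma describing the full idempotents of $M(D_R(\G))$: because $R$ is an integral domain its only idempotents are $0$ and $1$, so every idempotent of $M(D_R(\G))$ has the form $1_X$ for a clopen $X\subseteq\G^0$, and each such $1_X$ is automatically a projection (as $1_X^*=1_X$). A direct computation with the convolution product gives $1_X A_R(\G) 1_X = A_R(\G|_X)$ and $1_X D_R(\G) = D_R(\G|_X)$, and $1_X$ is full in $A_R(\G)$ precisely when $X$ is a full subset of $\G^0$; this is the algebraic analogue of the identifications behind \cite[Corollary 4.5]{CRS}. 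Granting this, a pair $(p_1,p_2)$ with a diagonal-preserving ring-isomorphism as in (8) is exactly a pair of full clopen sets $X_i$ together with a diagonal-preserving ring-isomorphism $A_R(\G_1|_{X_1})\cong A_R(\G_2|_{X_2})$. Since $X_i$ is open and meets the dense set from the hypothesis, and since the isotropy of $\G_i|_{X_i}$ at points of $X_i$ coincides with that of $\G_i$, the groupoid $\G_i|_{X_i}$ satisfies the hypotheses of Corollary~\ref{cor:steinberg}; that corollary then converts the isomorphism of corners into an isomorphism $\G_1|_{X_1}\cong\G_2|_{X_2}$ of full clopen restrictions, i.e.\ Kakutani equivalence (1). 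Running the argument backwards (using the $(1)\Rightarrow(3)$ direction of Corollary~\ref{cor:steinberg}) gives $(1)\Rightarrow(8)$, and the identical argument with the $*$-preserving half of Corollary~\ref{cor:steinberg} yields $(1)\Leftrightarrow(7)$. Combining the three blocks closes the loop.

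The main obstacle I anticipate is the corner lemma: one must verify carefully that the \emph{algebraic} multiplier ring $M(D_R(\G))$ has exactly the idempotents $1_X$, that fullness of $1_X$ in $A_R(\G)$ matches fullness of $X$ in $\G^0$, and that $1_X A_R(\G)1_X$ is diagonal-preservingly isomorphic to $A_R(\G|_X)$, all while checking that $\G|_X$ lands back in $\cls[(R,\{e\})]$ so that Corollary~\ref{cor:steinberg} applies. A secondary point needing care is the passage between full \emph{open} sets in (2) and full \emph{clopen} sets in (1) and (7)--(8), which is precisely where the groupoid-equivalence-from-a-full-open-subset fact and the $\sigma$-compactness hypothesis of \cite[Theorem 3.2]{CRS} are used.
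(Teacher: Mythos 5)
Your proposal is correct and follows essentially the same route as the paper: the paper obtains (1)--(4) directly from \cite[Theorem 3.2]{CRS} (which already contains the full-open-set condition (2), so your separate insertion of it is harmless but unnecessary), derives (4)$\Leftrightarrow$(5)$\Leftrightarrow$(6) from Theorem~\ref{thm:stable} with $\Gamma$ trivial, and disposes of (1)$\Leftrightarrow$(7)$\Leftrightarrow$(8) by noting it can be proved as in \cite[Corollary 4.5]{CRS}. Your third block simply spells out the corner argument that this citation encapsulates (idempotents of $M(D_R(\G))$ are $1_X$ for clopen $X$, $1_XA_R(\G)1_X=A_R(\G|_X)$, fullness of $1_X$ matches fullness of $X$, then apply Corollary~\ref{cor:steinberg}), which matches the intended proof.
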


\begin{proof}
The equivalence of (1)--(4) is proved in \cite[Theorem 3.2]{CRS}, and the equivalence of (4), (5), and (6) follows from Theorem~\ref{thm:stable} by letting $\Gamma$ be the trivial group. That (1), (7), and (8) are equivalent can be proved in a similar way to \cite[Corollary 4.5]{CRS}.
\end{proof}

\begin{remark}
It is natural to ask if there is a ``graded version" of Corollary~\ref{cor:stable}. We have not been able to prove such a result.
\end{remark}

\subsection{Diagonal-preserving isomorphisms of Steinberg algebras and actions of inverse semigroups}

We end this section with a corollary which might be useful if one wants to obtain results similar to those in the next section for systems, other than directed graphs, from which one can construct an ample Hausdorff groupoid.

Recall that if $\G$ is an ample Hausdorff groupoid, and $A$ is a compact open bisection of $\G$, then we denote by $\alpha_A$ the homeomorphism from $s(A)$ to $r(A)$ given by $\alpha_A(s(\eta))=r(\eta)$ for $\eta\in A$. If $c$ is a cocycle from $\G$ to a discrete group $\Gamma$, then we let $S_\G^c$ denote the inverse semigroup of compact open bisections $A$ of $\G$ such that $c(A)$ is a singleton (such sets are called \emph{homogeneous} in \cite{ABHS}). We let $\tilde{c}:S_\G^c\to \Gamma$ be the map given by $\tilde{c}(A)=g$ for $A\in S_\G^c$ with $A\subseteq c^{-1}(g)$.

For $A\in S_\G^c$ and $x\in s(A)$, let $\eta_{(A,x)}$ denote the unique element $\eta\in A$ for which $s(\eta)=x$. Then the map $A\mapsto\eta_{(A,x)}$ induces a group isomorphism from $\{A\in S_\G^c: x\in s(A),\ \alpha_A(x)=x,\ c(A)=\{e\}\}/\approx$ to $c^{-1}(e)\cap(\G)_x^x$ where $\approx$ is the equivalence relation given by $A_1\approx A_2$ if there is a compact open neighbourhood $U$ of $x$ in $\G^0$ such that $\{\eta\in A_1:s(\eta)\in U\}=\{\eta\in A_2:s(\eta)\in U\}$.
It is therefore possible to decide if $(\G,c)\in\cls$ from $S_\G^c$ and its action on $\G^0$ alone.

\begin{corollary}\label{cor:action}
	Let $R$ be a commutative integral domain with identity, let $\Gamma$ be a discrete group, and let $(\G_1,c_1), (\G_2,c_2)\in\cls$. The following are equivalent.
	\begin{enumerate}
		\item There is a diagonal-preserving graded $*$-algebra-isomorphism between $A_R(\G_1)$ and $A_R(\G_2)$.
		\item There is a diagonal-preserving graded ring-isomorphism between $A_R(\G_1)$ and \linebreak$A_R(\G_2)$.
		\item There is an isomorphism $\psi:\G_1\to\G_2$ such that $c_2\circ\psi=c_1$.
		\item There is an inverse semigroup isomorphism $\tau:S_{\G_1}^{c_1}\to S_{\G_2}^{c_2}$ such that $\tilde{c}_1=\tilde{c}_2\circ\tau$, and a homeomorphism $h:\G_1^0\to\G_2^0$ such that $s(\tau(A))=h(s(A))$, $r(\tau(A))=h(r(A))$, and $\alpha_{\tau(A)}(h(x))=h(\alpha_A(x))$ for $A\in S_{\G_1}^{c_1}$ and $x\in s(A)$.
	\end{enumerate}
\end{corollary}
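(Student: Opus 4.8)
The plan is to read off the equivalence $(1)\iff(2)\iff(3)$ directly from Theorem~\ref{thm:steinberg}, since these three conditions are exactly the three conditions of that theorem (merely listed in a different order). Thus the only new content is the equivalence of $(4)$ with $(3)$, which I would establish directly, using the identification — implicit in the discussion preceding the corollary — of $\G$ with the groupoid of germs of the action of $S_\G^c$ on $\G^0$ via the assignment $[A,x]\mapsto\eta_{(A,x)}$, together with the germ relation $\approx$.

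For $(3)\implies(4)$, given a cocycle-preserving isomorphism $\psi:\G_1\to\G_2$ I would set $\tau(A):=\psi(A)$ and $h:=\psi|_{\G_1^0}$. As $\psi$ is a homeomorphism preserving products, inverses, and the range and source maps, $\tau$ carries homogeneous compact open bisections to homogeneous compact open bisections and is an inverse semigroup isomorphism; the condition $c_2\circ\psi=c_1$ gives $\tilde{c}_1=\tilde{c}_2\circ\tau$; and the three identities $s(\tau(A))=h(s(A))$, $r(\tau(A))=h(r(A))$, $\alpha_{\tau(A)}(h(x))=h(\alpha_A(x))$ are immediate from $\psi$ being a groupoid homomorphism. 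These verifications are routine.

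For the substantive direction $(4)\implies(3)$, given $(\tau,h)$ I would define $\psi:\G_1\to\G_2$ by choosing, for each $\eta\in\G_1$, a homogeneous compact open bisection $A\in S_{\G_1}^{c_1}$ containing $\eta$ (such bisections form a basis because $c_1$ is continuous and $\G_1$ is ample) and setting $\psi(\eta):=\eta_{(\tau(A),h(s(\eta)))}$. The crux is well-definedness. I would first note that for an idempotent $U\in S_{\G_1}^{c_1}$, that is, a compact open subset of $\G_1^0$, one has $\tau(U)=h(U)$, since $\tau$ preserves idempotents and $s(\tau(U))=h(s(U))=h(U)$ while $\tau(U)=s(\tau(U))$. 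Writing the relation $A_1\approx A_2$ at $x$ as the existence of an idempotent $U\ni x$ with $A_1U=A_2U$, and applying the homomorphism $\tau$, I obtain $\tau(A_1)\,h(U)=\tau(A_2)\,h(U)$ with $h(x)\in h(U)$; hence $\tau$ carries $\approx$ to $\approx$, which is precisely what makes $\psi$ independent of the choice of $A$.

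The remaining checks then reduce to the bookkeeping already encapsulated in Proposition~\ref{pro:1}. Applying the same construction to $(\tau^{-1},h^{-1})$, which satisfies the analogous compatibility conditions, yields a map $\psi':\G_2\to\G_1$ that is a two-sided inverse of $\psi$, so $\psi$ is a bijection; since $\psi(A)=\tau(A)$ for every $A\in S_{\G_1}^{c_1}$ and these bisections form a basis, both $\psi$ and $\psi^{-1}=\psi'$ are open, so $\psi$ is a homeomorphism. Multiplicativity follows by choosing homogeneous bisections $A_i\ni\eta_i$ and using $\tau(A_1A_2)=\tau(A_1)\tau(A_2)$ together with $\alpha_{\tau(A_2)}(h(s(\eta_2)))=h(\alpha_{A_2}(s(\eta_2)))=h(r(\eta_2))$ to verify composability and compute the product; preservation of inverses is analogous, and $c_2(\psi(\eta))=\tilde{c}_2(\tau(A))=\tilde{c}_1(A)=c_1(\eta)$ gives $c_2\circ\psi=c_1$. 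I expect the main obstacle to be exactly the well-definedness step: translating the topological germ relation $\approx$ into inverse-semigroup language via idempotents and the identity $\tau(U)=h(U)$, so that the compatibility conditions on $(\tau,h)$ can be brought to bear. Once this is in place, bijectivity, the homeomorphism property, multiplicativity, and cocycle preservation all go through mechanically.
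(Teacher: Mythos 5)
Your proposal is correct and follows essentially the same route as the paper: the equivalence of (1)--(3) is read off from Theorem~\ref{thm:steinberg}, and (4)$\iff$(3) rests on identifying each $\G_i$ with the groupoid of germs of the action $A\mapsto\alpha_A$ of $S_{\G_i}^{c_i}$ on $\G_i^0$. The only difference is one of packaging: the paper delegates this identification to \cite[Proposition 5.4]{Exel} and calls the rest routine, whereas you construct $\psi(\eta)=\eta_{(\tau(A),h(s(\eta)))}$ by hand and verify well-definedness by expressing the germ relation $\approx$ via idempotents and the identity $\tau(U)=h(U)$ --- a correct and slightly more self-contained rendering of the same argument.
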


\begin{proof}
The equivalence of (1), (2), and (3) is proved in Theorem~\ref{thm:steinberg}. It is routine to check that (3) implies (4).
It follows from \cite[Proposition 5.4]{Exel} that the groupoid of germs for the action $A\mapsto\alpha_A$ of $S_{\G_i}^{c_i}$ on $\G_i^0$ is isomorphic to $\G_i$. It is straightforward to check that the map $\tilde{c}:S_{\G_i}^{c_i}\to \Gamma$ induces a cocycle on the groupoid of germs and that the isomorphism from \cite[Proposition 5.4]{Exel} intertwines this cocycle with $c_i$. It follows that (4) implies (3).
\end{proof}

\section{Diagonal-preserving graded isomorphisms of graph algebras}\label{sec:leavitt}

In this section, we apply Theorem~\ref{thm:steinberg} to graph groupoids in order to strengthen \cite[Corollary 4.4]{ABHS}, \cite[Theorem 5.3]{AER}, \cite[Theorem 6.2]{BCH}, \cite[Corollary 6.1]{CEOR}, and \cite[Corollary 6.4]{CEOR}, and obtain Leavitt path algebra versions of the results of \cite{CR}.

\subsection{Diagonal-preserving graded isomorphisms of graph algebras}
For a directed graph $E$ and a unital ring $R$, we let $\G_E$ denote the groupoid of $E$ (see for example \cite{AER,BCW,Car,CR,CW,CS}), $C^*(E)$ the $C^*$-algebra of $E$ (see for example \cite{AT,AER,BCW,Car,CR,Sor,Web}), $\mathcal{D}(E)$ the $C^*$-subalgebra $\overline{\operatorname{span}}\{s_\mu s_\mu^*:\mu\in E^*\}$ of $C^*(E)$, $L_R(E)$ the Leavitt path $R$-algebra of $E$ (see for example \cite{BCH,Car,Tomforde2}), and $D_R(E)$ for the $*$-subalgebra $\operatorname{span}_R\{\mu \mu^*:\mu\in E^*\}$ of $L_R(E)$. We say that an isomorphism between two Leavitt path algebras $L_R(E)$ and $L_R(F)$ is \emph{diagonal-preserving} if it maps $D_R(E)$ onto $D_R(F)$, and that a $*$-isomorphism between two graph algebras $C^*(E)$ and $C^*(F)$ is \emph{diagonal-preserving} if it maps $\mathcal{D}(E)$ onto $\mathcal{D}(F)$.

Recall that we are assuming that $R$ comes with an involution $r\mapsto r^*$ that is also a ring automorphism (the involution could be the identity map). The involution extends to an involution on $L_R(E)$ given by $r\mu\nu^*\mapsto r^*\nu\mu^*$ for $r\in R$ and $\mu,\nu\in E^*$, making $L_R(E)$ a $*$-algebra. 
A \emph{unital $*$-subring of} $\C$ is a subring $K$ of $\C$ that is closed under complex conjugation and contains $1$. The involution on $K$ is then given by complex conjugation. 
As in \cite{Car}, we say that a unital $*$-subring $K$ of $\C$ is \emph{kind} if $\lambda_0,\lambda_1,\dots,\lambda_n\in K$ satisfying $\lambda_0=\sum_{i=0}^n|\lambda_i|^2$ implies that $\lambda_1=\dots=\lambda_n=0$. For example, $\mathbb{Z}$ is a kind unital $*$-subring of $\C$. According to  \cite[Corollary~6]{Car}, if $K$ is a kind unital $*$-subring $K$ of $\C$, then any $*$-ring-isomorphism $L_K(E)\cong L_K(F)$ maps $D_K(E)$ onto $D_K(F)$.

Let $E$ be a directed graph, and let $k:E^1\to\mathbb{R}$ be a function. Then $k$ extends to a function $k:E^*\to\mathbb{R}$ given by $k(v)=0$ for $v\in E^0$ and $k(e_1\dots e_n)=k(e_1)+\dots +k(e_n)$ for $e_1\dots e_n\in E^n$, $n\ge 1$. We then get a continuous cocycle $c_k:\G_E\to\mathbb{R}$ given by $c_k((\mu x,|\mu|-|\nu|,\nu x))=k(\mu)-k(\nu)$. Let $\Gamma_k$ denote the additive subgroup $\Gamma_k=c_k(\G_E)$ of $\mathbb{R}$ generated by $k(E^1)$. For any commutative ring $R$ with identity, we get a $\Gamma_k$-grading of $L_R(E)$ by letting $L_R(E)^k_g:= \operatorname{span}_R \{\mu\nu^*:\mu,\nu\in E^*,\ r(\mu)=r(\nu),\ k(\mu)-k(\nu)=g\}$ for $g\in \Gamma_k$.
We also get a \emph{generalised gauge action} $\gamma^{E,k}:\mathbb{R}\to\operatorname{Aut}(C^*(E))$ given by $\gamma^{E,k}_t(p_v)=p_v$ for $v\in E^0$ and $\gamma^{E,k}_t(s_e)=e^{ik(e)t}s_e$ for $e\in E^1$. 

From Theorem~\ref{thm:steinberg} we get the following strengthening of \cite[Theorem 3.1]{CR}.

\begin{theorem}\label{thm:1}
	Let $E$ and $F$ be directed graphs and $k:E^1\to\mathbb{R}$ and $l:F^1\to\mathbb{R}$ functions. Let $R$ be a commutative integral domain with identity and $K$ a kind unital $*$-subring of $\C$. The following are equivalent.
	\begin{enumerate}
		\item[(1)] There is an isomorphism $\psi:\G_E\to\G_F$ such that $c_l\circ\psi=c_k$.
		\item[(2)] $\Gamma_k=\Gamma_l$ and there is a diagonal-preserving $*$-algebra-isomorphism $\phi:L_R(E)\to L_R(F)$ such that $\phi(L_R(E)^k_g)=L_R(F)^l_g$ for $g\in \Gamma_k$.
		\item[(3)] $\Gamma_k=\Gamma_l$ and there is a diagonal-preserving ring-isomorphism $\phi:L_R(E)\to L_R(F)$ such that $\phi(L_R(E)^k_g)=L_R(F)^l_g$ for $g\in \Gamma_k$.
		\item[(4)] $\Gamma_k=\Gamma_l$ and there is a $*$-ring-isomorphism $\phi:L_K(E)\to L_K(F)$ such that \linebreak$\phi(L_K(E)^k_g)=L_K(F)^l_g$ for $g\in \Gamma_k$.
		\item[(5)] There is a diagonal-preserving $*$-isomorphism $\phi:C^*(E)\to C^*(F)$ such that $\gamma_t^{F,l}\circ\phi=\phi\circ\gamma_t^{E,k}$ for $t\in\mathbb{R}$.
	\end{enumerate}
\end{theorem}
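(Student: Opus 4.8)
The plan is to deduce all five conditions from Theorem~\ref{thm:steinberg} applied to the graph groupoids $\G_E$ and $\G_F$, supplemented by \cite[Corollary~6]{Car} for the statement about the kind ring $K$ and by \cite[Theorem~3.1]{CR} for the $C^*$-algebraic statement. Throughout I would regard $\mathbb{R}$ as a \emph{discrete} group, so that $c_k:\G_E\to\mathbb{R}$ and $c_l:\G_F\to\mathbb{R}$ are continuous cocycles into one common discrete group. By \cite[Remark~4.4]{CFST} and \cite[Example~3.2]{CS} there are identifications $L_R(E)=A_R(\G_E)$ and $D_R(E)=D_R(\G_E)$ under which a path product $\mu\nu^*$ becomes the indicator of its cylinder set; since $c_k$ is constant with value $k(\mu)-k(\nu)$ there, the component $L_R(E)^k_g$ is carried onto $A_R(\G_E)_g=\{f:\supp(f)\subseteq c_k^{-1}(g)\}$ for each $g\in\mathbb{R}$ (the latter being $0$ unless $g\in\Gamma_k$), and likewise for $F$ via \cite[Lemma~3.1]{CS}. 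Finally, since $K$ is a unital subring of $\C$ it is a commutative integral domain with identity, so Theorem~\ref{thm:steinberg} applies verbatim with $R$ replaced by $K$.

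The key structural input is that $(\G_E,c_k),(\G_F,c_l)\in\cls[(R,\mathbb{R})]$. For this I would use the standard description of the isotropy of a graph groupoid: $(\G_E)_x^x$ is trivial when the boundary path $x$ is not eventually periodic and is infinite cyclic when it is. Hence $c_k^{-1}(e)\cap(\G_E)_x^x$ is a subgroup of $\mathbb{Z}$, so it is free abelian, hence indicable throughout, and by \cite[Theorem~12 and Theorem~13]{Higman} its group-ring has no zero-divisors and only trivial units, for \emph{every} $x\in\G_E^0$. Thus the dense subset demanded in the definition of $\cls$ can be taken to be all of $\G_E^0$, and the same argument applies to $F$.

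With this in place, $(1)\Leftrightarrow(2)\Leftrightarrow(3)$ is Theorem~\ref{thm:steinberg} for $\Gamma=\mathbb{R}$, after translating the grading conditions. Condition $(1)$ here is precisely condition $(1)$ of Theorem~\ref{thm:steinberg}, and it forces $\Gamma_k=c_k(\G_E)=c_l(\G_F)=\Gamma_l$ since $\psi$ is onto. On the algebra side, a diagonal-preserving $\mathbb{R}$-graded isomorphism $\phi$ satisfies $\phi(A_R(\G_E)_g)=A_R(\G_F)_g$ for all $g\in\mathbb{R}$; comparing the nonzero components forces $\Gamma_k=\Gamma_l$, and restricting to $g\in\Gamma_k$ yields $\phi(L_R(E)^k_g)=L_R(F)^l_g$. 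Conversely, if $\Gamma_k=\Gamma_l$ and $\phi(L_R(E)^k_g)=L_R(F)^l_g$ for $g\in\Gamma_k$, then both components vanish for $g\notin\Gamma_k$, so $\phi$ is $\mathbb{R}$-graded; hence $(2)$ and $(3)$ are exactly the $*$-algebra and ring-isomorphism conditions of Theorem~\ref{thm:steinberg}. For $(1)\Leftrightarrow(4)$ I would run the same equivalence with $R=K$: given $(4)$, the graded $*$-ring-isomorphism is automatically diagonal-preserving by \cite[Corollary~6]{Car} (this is where kindness of $K$ is used), so it witnesses the analogue of $(3)$ with $R=K$ and hence $(1)$; conversely $(1)$ produces the analogue of $(2)$ with $R=K$, which in particular is a graded $*$-ring-isomorphism, giving $(4)$. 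Finally $(1)\Leftrightarrow(5)$ is \cite[Theorem~3.1]{CR}, under the correspondence whereby intertwining the generalised gauge actions $\gamma^{E,k}$ and $\gamma^{F,l}$ is the $C^*$-analogue of preserving $c_k$ and $c_l$.

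I expect the step requiring the most care to be the verification that $(\G_E,c_k)\in\cls$: identifying the isotropy groups of graph groupoids and reducing to Higman's theorem is the structural fact that makes Theorem~\ref{thm:steinberg} applicable to \emph{arbitrary} directed graphs, and it is where the generality over earlier work is won. A secondary point demanding attention is the bookkeeping between the ambient $\mathbb{R}$-grading of Theorem~\ref{thm:steinberg} and the $\Gamma_k$-grading appearing in $(2)$--$(4)$, where one must check that a graded isomorphism simultaneously forces and uses $\Gamma_k=\Gamma_l$. The equivalence $(1)\Leftrightarrow(5)$ is the one genuinely external ingredient, lying outside the algebraic framework developed here.
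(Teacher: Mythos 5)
Your proposal is correct and follows essentially the same route as the paper: identify $L_R(E)$ with $A_R(\G_E)$ via \cite[Example~3.2]{CS}, verify $(\G_E,c_k),(\G_F,c_l)\in\cls$ using the fact that the isotropy groups of a graph groupoid are trivial or infinite cyclic together with Higman's theorem, apply Theorem~\ref{thm:steinberg} (with $\mathbb{R}$ discrete) for $(1)\Leftrightarrow(2)\Leftrightarrow(3)$ and, over $K$, for $(1)\Leftrightarrow(4)$ via \cite[Corollary~6]{Car}, and quote \cite[Theorem~3.1]{CR} for $(1)\Leftrightarrow(5)$. Your extra bookkeeping relating the ambient $\mathbb{R}$-grading to the $\Gamma_k$-grading is a point the paper leaves implicit, and it is handled correctly.
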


\begin{proof}
(1)$\iff$(2)$\iff$(3): It is shown in \cite[Example 3.2]{CS} that there are diagonal-preserving graded isomorphisms $A_R(\G_E)\cong L_R(E)$ and $A_R(F)\cong L_R(F)$. One easily checks that these isomorphisms are $*$-algebra-isomorphisms. Fix $x \in \G_E^0$. The group $(\G_E)_x^x$ is either trivial or isomorphic to the group of integers, so is indicable throughout. It follows that the group-ring $R(c_k^{-1}(e) \cap (\G_E)_x^x)$ has no zero-divisors and only trivial units by \cite[Theorem~12 and Theorem~13]{Higman}. Similarly, the group-ring $R(c_l^{-1}(e) \cap (\G_F)_x^x)$ has no zero-divisors and only trivial units for all $x \in \G_F^0$. The equivalence of (1), (2), and (3) now follows from Theorem~\ref{thm:steinberg} since $\G_E$ and $\G_F$ are ample Hausdorff groupoids. 

(1)$\iff$(4): It follows from \cite[Corollary~6]{Car} that any $*$-ring-isomorphism $\phi:L_K(E)\cong L_K(F)$ is diagonal-preserving. It therefore follows from (1)$\iff$(3) that (1)$\iff$(4) holds.

(1)$\iff$(5) is proved in \cite[Theorem 3.1]{CR}.
\end{proof}

\subsection{Orbit equivalence of graphs}

For a directed graph $E$, we let $\partial E$ denote the \emph{boundary path space of $E$} (see for example \cite{AER,BCW,Car,CR,CW,Web}), and $\mathcal{P}_E$ the \emph{pseudogroup of $E$} (see \cite{AER,BCW}). Notice that $\partial E$ is homeomorphic to the unit space of $\G_E$. If we identify $\partial E$ and $\G_E^0$ by this homeomorphism, then the pseudogroup $\mathcal{P}_E$ is equal to the pseudogroup $\{\alpha_A:A\text{ is an open bisection of }\G_E\}$. A point $x\in\partial E$ is \emph{eventually periodic} if there are $n,p\in\N$ with $p>0$ such that $\sigma_E^{n+p}(x)=\sigma_E^n(x)$ where $\sigma_E:\partial E^{\ge 1}\to\partial E$ is the shift map (see for example \cite{BCW}). An \emph{orbit equivalence} between two directed graphs $E$ and $F$ is a homeomorphism $h:\partial E\to\partial F$ for which there are continuous maps $k,l:\partial E^{\ge 1}\to\N$ and $k',l':\partial F^{\ge 1}\to\N$ satisfying $\sigma_F^{k(x)}(h(\sigma_E(x)))=\sigma_F^{l(x)}(h(x))$ for all $x\in\partial E^{\ge 1}$ and $\sigma_E^{k'(y)}(h^{-1}(\sigma_F(y)))=\sigma_E^{l'(y)}(h^{-1}(y))$ for all $y\in\partial F^{\ge 1}$ (see \cite[Definition~3.1]{BCW}). A homeomorphism $h:\partial E\to\partial F$ is said to \emph{preserve isolated eventually periodic points} (or just \emph{preserve periodic points}) if $x\in\partial E$ is an isolated eventually periodic point if and only if $h(x)\in\partial F$ is an isolated eventually periodic point (see \cite[Definition 3.2]{AER} and \cite{CW}).

By letting the functions $k:E^1\to\mathbb{R}$ and $l:F^1\to\mathbb{R}$ be constantly equal to 0 and combining Theorem~\ref{thm:1} with \cite[Theorem 5.3]{AER}, we obtain the following strengthening of \cite[Corollary 4.4]{ABHS}, \cite[Theorem 5.3]{AER}, and \cite[Theorem 6.2]{BCH}.

\begin{corollary}\label{cor:orbit}
	Let $E$ and $F$ be directed graphs and let $R$ be a commutative integral domain with identity and $K$ a kind unital $*$-subring of $\C$. The following are equivalent.
	\begin{enumerate}
		\item[(1)] The groupoids $\G_E$ and $\G_F$ are isomorphic.
		\item[(2)] There is a diagonal-preserving $*$-algebra-isomorphism between $L_R(E)$ and \linebreak$L_R(F)$.
		\item[(3)] There is a diagonal-preserving ring-isomorphism between $L_R(E)$ and $L_R(F)$.
		\item[(4)] There is a $*$-ring-isomorphism between $L_K(E)$ and $L_K(F)$.
		\item[(5)] There is a diagonal-preserving $*$-isomorphism between $C^*(E)$ and $C^*(F)$.
	\end{enumerate}
\end{corollary}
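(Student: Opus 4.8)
The plan is to obtain Corollary~\ref{cor:orbit} as the special case of Theorem~\ref{thm:1} in which the edge-functions are taken to be the zero maps, $k\colon E^1\to\RR$ and $l\colon F^1\to\RR$ both identically $0$. With this choice the cocycles collapse: $c_k\bigl((\mu x,|\mu|-|\nu|,\nu x)\bigr)=k(\mu)-k(\nu)=0$ on all of $\G_E$, so $c_k\equiv 0$ and likewise $c_l\equiv 0$. Hence $\Gamma_k=c_k(\G_E)=\{0\}=\Gamma_l$, the standing requirement $\Gamma_k=\Gamma_l$ in parts (2)--(4) of Theorem~\ref{thm:1} is automatic, and the $\Gamma_k$-grading degenerates to $L_R(E)^k_0=L_R(E)$ with $L_R(E)^k_g=0$ for $g\neq 0$ (and similarly for $F$).

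First I would verify that each of the five conditions of Theorem~\ref{thm:1} reduces, under this specialisation, to the correspondingly-numbered condition of Corollary~\ref{cor:orbit}. In part (1) the constraint $c_l\circ\psi=c_k$ reads $0=0$ and so holds for every groupoid isomorphism $\psi\colon\G_E\to\G_F$, so Theorem~\ref{thm:1}(1) becomes merely the existence of such a $\psi$, which is Corollary~\ref{cor:orbit}(1). Since the gradings are concentrated in degree $0$, the grading-preservation clauses $\phi(L_R(E)^k_g)=L_R(F)^l_g$ in parts (2), (3), (4) hold vacuously, so those conditions collapse exactly to Corollary~\ref{cor:orbit}(2), (3), (4). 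For part (5) the key observation is that the generalised gauge action trivialises: since $\gamma^{E,k}_t(s_e)=e^{ik(e)t}s_e=e^{0}s_e=s_e$ and $\gamma^{E,k}_t(p_v)=p_v$, we have $\gamma^{E,0}_t=\operatorname{id}_{C^*(E)}$ for all $t\in\RR$, and likewise $\gamma^{F,0}_t=\operatorname{id}$; thus the intertwining condition $\gamma^{F,l}_t\circ\phi=\phi\circ\gamma^{E,k}_t$ is automatic, and Theorem~\ref{thm:1}(5) reduces to the bare existence of a diagonal-preserving $*$-isomorphism $C^*(E)\to C^*(F)$, which is Corollary~\ref{cor:orbit}(5). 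Assembling these reductions yields the equivalence of all five conditions.

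To substantiate the claim that this \emph{strengthens} \cite[Theorem 5.3]{AER}, I would invoke that result to identify condition (1) with the existence of an orbit equivalence $\partial E\to\partial F$ that preserves isolated eventually periodic points, thereby placing the algebraic conditions (2)--(5) on an equal footing with the dynamical one discussed in the introduction. The feature that makes the corollary hold for \emph{arbitrary} graphs --- and hence genuinely improve \cite[Corollary 4.4]{ABHS}, \cite[Theorem 5.3]{AER} and \cite[Theorem 6.2]{BCH} --- is already built into Theorem~\ref{thm:1}: every isotropy group $(\G_E)_x^x$ is either trivial or infinite cyclic, hence indicable throughout, so by \cite[Theorem~12 and Theorem~13]{Higman} the relevant group-rings have no zero-divisors and only trivial units, with no hypothesis on $E$ whatsoever.

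Because the corollary is a specialisation, I do not expect a substantive obstacle at this level; the conceptual work resides upstream in Theorem~\ref{thm:steinberg} and Theorem~\ref{thm:1}. The only points demanding genuine care are the degeneration checks above --- in particular confirming that the gauge action becomes the identity so that part (5) sheds its intertwining constraint, and confirming that the vanishing of the gradings does not silently weaken any of the isomorphism-type requirements (diagonal-preserving, $*$-, or ring-). If one prefers not to route $(1)\iff(5)$ through the gauge-equivariant statement, one may instead cite \cite[Theorem 5.3]{AER} (or \cite{BCW}) directly for this single equivalence, which is the role played by the reference to \cite[Theorem 5.3]{AER} in the proof.
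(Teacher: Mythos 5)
Your proposal is correct and follows exactly the paper's route: the paper proves Corollary~\ref{cor:orbit} by applying Theorem~\ref{thm:1} with $k$ and $l$ constantly equal to $0$, which is precisely your specialisation, and your degeneration checks (trivial cocycles, trivial grading, trivial gauge action) are the implicit content of that one-line proof. The appeal to \cite[Theorem 5.3]{AER} for the dynamical reformulation is likewise consistent with the paper, which records it in the remark following the corollary rather than in the proof itself.
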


\begin{proof}
	The equivalence of (1)--(5) follows from an application of Theorem~\ref{thm:1} with $k:E^1\to\mathbb{R}$ and $l:F^1\to\mathbb{R}$ constantly equal to 0.
\end{proof}

\begin{remark}\label{rmk:orbit} It is shown in \cite[Theorem~5.3]{AER} that conditions (1) and (5) are each equivalent to each of the following two conditions.
\begin{enumerate}
	\item[(6)] There is a homeomorphism $h:\partial E\to\partial F$ such that $\{h\circ\alpha\circ h^{-1}:\alpha\in\mathcal{P}_E\}=\mathcal{P}_F$, and such that if $x\in\partial E$ is isolated, then $x$ is eventually periodic if and only if $h(x) \in\partial F$ is eventually periodic.
	\item[(7)] There is an orbit equivalence from $E$ to $F$ that preserves isolated eventually periodic points.
\end{enumerate}
It follows from \cite[Corollary 4.6]{CW} and the discussion right before \cite[Proposition 3.1]{CW} that if either $E$ and $F$ each satisfy condition (L) (i.e., every cycle in $E$ and every cycle in $F$ have an exit), or $E$ and $F$ each have only finitely many vertices and no sinks, then any homeomorphism $h:\partial E\to\partial F$ automatically preserves isolated periodic points.
\end{remark}

\begin{remark}\label{rmk:isoconjecture}
In \cite[Page~3752]{AT}, Abrams and Tomforde conjectured that if there is a ring-isomorphism between $L_\C(E)$ and $L_\C(F)$, then there is a $*$-isomorphism between $C^*(E)$ and $C^*(F)$. This conjecture was recently confirmed in the case when $E$ and $F$ have finitely many vertices (see \cite[Theorem~14.7]{ERRS}).

By setting $R=\C$ in Corollary~\ref{cor:orbit}, we see that (3) $\implies$ (5) confirms a ``diagonal-preserving" version of Abrams and Tomforde's isomorphism conjecture for arbitrary directed graphs (cf. \cite[Corollary~4.4]{ABHS} and \cite[Corollary~6.3]{BCH}). 
\end{remark}

\subsection{Eventual conjugacy of graphs}
Let $E$ be a directed graph. If the function $k:E^1\to\mathbb{R}$ is constanly equal to 1, then $\Gamma_k=\mathbb{Z}$ and the cocycle $c_k:\G_E\to\mathbb{Z}$ is the standard cocycle $(x,n,y)\mapsto n$ which we denote by $c_E$. The corresponding grading of $L_R(E)$ is the usual $\mathbb{Z}$-grading, where $L_R(E)_n=\{\mu\nu^*:\mu,\nu\in E^*, |\mu|-|\nu|=n\}$, and the corresponding action $\gamma^{E,k}:\mathbb{R}\to\operatorname{Aut}(C^*(E))$ satisfies $\gamma_t^{E,k}=\lambda_{e^{it}}^E$ for all $t\in\mathbb{R}$, where $\lambda^E$ is the standard \emph{gauge action} of $\mathbb{T}$ on $C^*(E)$. 

As in \cite{CR} (see also \cite{MCon}), we say that two directed graphs $E$ and $F$ are \emph{eventually conjugate} if there exist a homeomorphism $h$ from $\partial E$ to $\partial F$ and continuous maps $k:\partial E^{\ge 1}\to\mathbb{N}$ and $k':\partial F^{\ge 1}\to\mathbb{N}$ such that $\sigma_F^{k(x)}(h(\sigma_E(x)))=\sigma_F^{k(x)+1}(h(x))$ for all $x\in \partial E^{\ge 1}$, and likewise $\sigma_E^{k'(y)}(h^{-1}(\sigma_F(y)))=\sigma_E^{k'(y)+1}(h^{-1}(y))$ for all $y\in \partial F^{\ge 1}$. 
We call such a homeomorphism $h: \partial E \to \partial F$ an \emph{eventual conjugacy}.

Notice that if $h:\partial E\to\partial F$ is a conjugacy in the sense that $\sigma_F(h(x))=h(\sigma_E(x))$ for all $x\in \partial E^{\ge 1}$, then $h$ is an eventual conjugacy (in this case we can take $k$ and $k'$ to be constantly equal to 0).

\begin{corollary}\label{thm:3}
	Let $E$ and $F$ be directed graphs and let $R$ be a commutative integral domain with identity and $K$ a kind unital $*$-subring of $\C$. The following are equivalent.
	\begin{enumerate}
		\item[(1)] The graphs $E$ and $F$ are eventually conjugate.
		\item[(2)] There is an isomorphism $\psi:\G_E\to\G_F$ such that $c_F\circ\psi=c_E$.
		\item[(3)] There is a diagonal-preserving $*$-algebra-isomorphism $\phi:L_R(E)\to L_R(F)$ such that \, $\phi(L_R(E)_n)=L_R(F)_n$ for $n\in\mathbb{Z}$.
		\item[(4)] There is a diagonal-preserving ring-isomorphism $\phi:L_R(E)\to L_R(F)$ such that $\phi(L_R(E)_n)=L_R(F)_n$ for $n\in\mathbb{Z}$.
		\item[(4)] There is a $*$-ring-isomorphism $\phi: L_K(E) \to L_K(F)$ such that $\phi(L_K(E)_n)=L_K(F)_n$ for $n \in \mathbb{Z}$.
		\item[(5)] There is a diagonal-preserving $*$-isomorphism $\phi:C^*(E)\to C^*(F)$ such that $\lambda_z^{F}\circ\phi=\phi\circ\lambda_z^{E}$ for $z\in\mathbb{T}$.
	\end{enumerate}
\end{corollary}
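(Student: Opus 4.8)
The plan is to reduce almost everything to Theorem~\ref{thm:1} and then supply the one genuinely new equivalence, between eventual conjugacy and cocycle-preserving groupoid isomorphism, from \cite{CR}. First I would apply Theorem~\ref{thm:1} with the functions $k\colon E^1\to\mathbb{R}$ and $l\colon F^1\to\mathbb{R}$ both taken to be constantly equal to $1$. As recorded at the beginning of this subsection, this choice forces $\Gamma_k=\Gamma_l=\mathbb{Z}$, identifies $c_k$ and $c_l$ with the standard cocycles $c_E$ and $c_F$, identifies the $\Gamma_k$-grading of $L_R(E)$ with the usual $\mathbb{Z}$-grading (and likewise for $F$), and identifies the generalised gauge action $\gamma^{E,k}$ with $t\mapsto\lambda^E_{e^{it}}$. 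Under these identifications the conditions of the present corollary concerning $\psi$, the two Leavitt path algebra isomorphisms, the $L_K$-isomorphism, and the gauge-invariant $*$-isomorphism of $C^*(E)$ and $C^*(F)$ are precisely conditions~(1)--(5) of Theorem~\ref{thm:1}. Theorem~\ref{thm:1} therefore delivers the equivalence of all of these at once, and the only remaining task is to tie eventual conjugacy (condition~(1)) to the groupoid condition~(2).

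For this last equivalence I would use \cite{CR}, where the notion of eventual conjugacy was tailored to match exactly the cocycle-preserving groupoid isomorphisms. The correspondence is transparent on the level of the shift map. An isomorphism $\psi\colon\G_E\to\G_F$ with $c_F\circ\psi=c_E$ restricts on unit spaces to a homeomorphism $h\colon\partial E\to\partial F$, and since the triple $(x,1,\sigma_E(x))$ lies in $\G_E$ for every $x\in\partial E^{\ge 1}$, cocycle-preservation forces $\psi(x,1,\sigma_E(x))=(h(x),1,h(\sigma_E(x)))\in\G_F$; unwinding what membership in $\G_F$ means yields a natural number $k(x)$ with $\sigma_F^{k(x)+1}(h(x))=\sigma_F^{k(x)}(h(\sigma_E(x)))$, and applying the same reasoning to $\psi^{-1}$ produces $k'$. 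Conversely, given an eventual conjugacy $(h,k,k')$ one defines $\psi(x,n,y)=(h(x),n,h(y))$ and checks, by iterating the defining relations of an eventual conjugacy, that tail-equivalence of $x$ and $y$ in $\partial E$ with difference $n$ is carried to tail-equivalence of $h(x)$ and $h(y)$ in $\partial F$ with the same difference, so that $\psi$ is a well-defined, cocycle-preserving groupoid isomorphism.

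The part I expect to require real care, and which is exactly what \cite{CR} supplies, is the continuity of the maps $k$ and $k'$ and the verification that the $\psi$ built from $(h,k,k')$ is a homeomorphism rather than merely a bijection. Continuity of $k$ amounts to its being locally constant; this follows because $\psi$ sends compact open bisections to compact open bisections, so the set on which a fixed value of $k$ works is compact open, and these sets partition $\partial E^{\ge 1}$. Once continuity and the homeomorphism property are in hand, the equivalence (1)$\iff$(2) is complete, and combining it with the equivalences coming from Theorem~\ref{thm:1} establishes the corollary.
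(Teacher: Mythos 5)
Your proposal is correct and follows essentially the same route as the paper: the equivalence of conditions (2)--(5) is obtained from Theorem~\ref{thm:1} with $k$ and $l$ constantly equal to $1$, and the remaining link to eventual conjugacy is supplied by \cite{CR} (the paper cites \cite[Theorem~4.1]{CR} for the equivalence of (1), (2) and (5)). Your additional sketch of how the cocycle-preserving groupoid isomorphism corresponds to an eventual conjugacy is accurate but is exactly the content of the cited result rather than a new argument.
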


\begin{proof}
The equivalence of (1), (2) and (5) was established in \cite[Theorem~4.1]{CR}. The equivalence of (2)--(5) follows from Theorem~\ref{thm:1} by letting $k:E^1\to\mathbb{R}$ and $l:F^1\to\mathbb{R}$ be constantly equal to 1.
\end{proof}

\subsection{Diagonal-preserving graded stable isomorphisms of graph algebras}
Here we present and prove a ``stabilised version'' of Theorem~\ref{thm:1}, giving a strengthening of \cite[Theorem~3.3]{CR}. 

We denote by $\K$ the compact operators on $\ell^2(\N)$, and by $\mathcal{C}$ the maximal abelian subalgebra of $\K$ consisting of diagonal operators. 
We say that an isomorphism $\phi:L_R(E)\otimes M_\infty(R)\to L_R(F)\otimes M_\infty(R)$ is \emph{diagonal-preserving} if $\phi(D_R(E)\otimes D_\infty(R))=D_R(F)\otimes D_\infty(R)$, and that a $*$-isomorphism $\phi:C^*(E)\otimes\mathcal{K}\to C^*(F)\otimes\mathcal{K}$ is \emph{diagonal-preserving} if $\phi:(\mathcal{D}(E)\otimes\mathcal{C})=\mathcal{D}(F)\otimes\mathcal{C}$.

As in \cite{CRS}, for a directed graph $E$, we denote by $SE$ the graph obtained by attaching a head $\dots e_{3,v} e_{2,v} e_{1,v}$ to every vertex $v \in E^0$ (see \cite[Definition 4.2]{T2}). For a function $k:E^1\to\mathbb{R}$, we let $\bar{k}:(SE)^1\to\mathbb{R}$ be the function given by $\bar{k}(e)=k(e)$ for $e\in E^1$, and $\bar{k}(e_{i,v})=0$ for $v\in E^0$ and $i=1,2,\dots$.

\begin{theorem}\label{thm:2}
	Let $E$ and $F$ be directed graphs and $k:E^1\to\mathbb{R}$ and $l:F^1\to\mathbb{R}$ functions. Let $R$ be a commutative integral domain with identity and $K$ a kind unital $*$-subring of $\C$. The following are equivalent.
	\begin{enumerate}
		\item[(1)] There is an isomorphism $\psi:\G_{SE} \to\G_{SF}$ such that $c_{\bar{l}}\circ\psi=c_{\bar{k}}$.
		\item[(2)] $\Gamma_k=\Gamma_l$ and there is a diagonal-preserving $*$-algebra-isomorphism $\phi:L_R(E)\otimes M_\infty(R)\to L_R(F)\otimes M_\infty(R)$ such that $\phi(L_R(E)^k_g\otimes M_\infty(R))=L_R(F)^l_g\otimes M_\infty(R)$ for $g\in \Gamma_k$.
		\item[(3)] $\Gamma_k=\Gamma_l$ and there is a diagonal-preserving ring-isomorphism $\phi:L_R(E)\otimes M_\infty(R)\linebreak\to L_R(F)\otimes M_\infty(R)$ such that $\phi(L_R(E)^k_g\otimes M_\infty(R))=L_R(F)^l_g\otimes M_\infty(R)$ for $g\in \Gamma_k$.
		\item[(4)] $\Gamma_k=\Gamma_l$ and there is a $*$-ring-isomorphism $\phi:L_K(E)\otimes M_\infty(K)\to L_K(F)\otimes M_\infty(K)$ such that $\phi(L_K(E)^k_g\otimes M_\infty(K))=L_K(F)^l_g\otimes M_\infty(K)$ for $g\in \Gamma_k$.
		\item[(5)] There is a diagonal-preserving $*$-isomorphism $\phi:C^*(E)\otimes\mathcal{K}\to C^*(F)\otimes\mathcal{K}$ such that $(\gamma_t^{F,l} \otimes\id_\mathcal{K})\circ\phi=\phi\circ(\gamma_t^{E,k}\otimes\id_\mathcal{K})$ for $t\in\mathbb{R}$.
	\end{enumerate}
	\end{theorem}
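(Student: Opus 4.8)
The plan is to apply Theorem~\ref{thm:1} to the graphs $SE$ and $SF$ equipped with the weight functions $\bar{k}$ and $\bar{l}$, and then to transport the resulting algebra isomorphisms across the standard stabilisation identifications. Note first that since the head edges $e_{i,v}$ are assigned weight $\bar{k}(e_{i,v})=0$, they contribute nothing to the subgroup of $\RR$ generated by the weights; hence $\Gamma_{\bar{k}}=\Gamma_k$ and $\Gamma_{\bar{l}}=\Gamma_l$, so the hypothesis $\Gamma_{\bar{k}}=\Gamma_{\bar{l}}$ appearing in Theorem~\ref{thm:1} is exactly the condition $\Gamma_k=\Gamma_l$ in our statement. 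Because $\G_{SE}$ and $\G_{SF}$ are themselves graph groupoids, Theorem~\ref{thm:1} applies verbatim with $E\mapsto SE$, $F\mapsto SF$, $k\mapsto\bar{k}$, $l\mapsto\bar{l}$, and shows that the isomorphism condition in (1) is equivalent to the existence of a diagonal-preserving graded $*$-algebra-isomorphism (respectively graded ring-isomorphism) $L_R(SE)\to L_R(SF)$ intertwining the $c_{\bar{k}}$- and $c_{\bar{l}}$-gradings, to a graded $*$-ring-isomorphism $L_K(SE)\to L_K(SF)$, and to a diagonal-preserving $*$-isomorphism $C^*(SE)\to C^*(SF)$ intertwining the generalised gauge actions.

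It then remains to identify each algebra of $SE$ (and symmetrically of $SF$) with the corresponding stabilisation of the algebra of $E$. Concretely, I would use the diagonal-preserving graded $*$-algebra-isomorphism
\[
L_R(SE)\cong L_R(E)\otimes M_\infty(R)
\]
together with its $K$-coefficient analogue $L_K(SE)\cong L_K(E)\otimes M_\infty(K)$ and the diagonal-preserving, gauge-equivariant $*$-isomorphism $C^*(SE)\cong C^*(E)\otimes\K$; these are the graph-level stabilisation isomorphisms recorded in \cite{CRS} (cf. \cite{T2}). The gradings are compatible because every partial isometry $\mu\nu^*$ supported on the attached heads satisfies $\bar{k}(\mu)-\bar{k}(\nu)=0$ and hence lies in degree $e$; under the identification these elements furnish the matrix units of $M_\infty(R)$, so the isomorphism carries $L_R(SE)^{\bar{k}}_g$ onto $L_R(E)^k_g\otimes M_\infty(R)$ for each $g\in\Gamma_k$. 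Likewise the head edges are fixed by the gauge action, so the $C^*$-identification intertwines $\gamma^{SE,\bar{k}}$ with $\gamma^{E,k}\otimes\id_{\K}$.

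Composing the equivalences supplied by Theorem~\ref{thm:1} for $SE$ and $SF$ with these stabilisation identifications on each side then yields the equivalence of (1)--(5), statements (2)--(5) being obtained from the corresponding statements for $SE$, $SF$ purely by conjugation, with no further groupoid-theoretic input. The main obstacle is concentrated entirely in the stabilisation step: one must verify that the identifications $L_R(SE)\cong L_R(E)\otimes M_\infty(R)$ and $C^*(SE)\cong C^*(E)\otimes\K$ can be chosen to be \emph{simultaneously} diagonal-preserving, grading-respecting (respectively gauge-equivariant), and $*$-preserving, and that the induced gradings match as claimed above. Once this compatibility is confirmed, the theorem follows immediately.
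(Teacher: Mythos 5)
Your argument is correct, and it rests on the same underlying facts as the paper's proof, but it is organised the other way around. The paper works at the groupoid level first: it invokes \cite[Lemma~4.1]{CRS} to get an isomorphism $\chi_E:\G_E\times\mathcal{R}\to\G_{SE}$ with $c_{\bar k}\circ\chi_E=\bar{c_k}$, so that condition (1) becomes a statement about $\G_E\times\mathcal{R}$ and $\G_F\times\mathcal{R}$, and then applies the stabilised reconstruction theorem (Theorem~\ref{thm:stable}) together with the canonical diagonal-preserving graded isomorphism $A_R(\G\times\mathcal{R})\cong A_R(\G)\otimes M_\infty(R)$ to obtain (1)$\iff$(2)$\iff$(3)$\iff$(4); the equivalence with (5) is quoted from \cite[Theorem~3.3]{CR}. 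You instead apply Theorem~\ref{thm:1} verbatim to the desingularised graphs $SE$, $SF$ and then conjugate by the algebra-level stabilisation isomorphisms $L_R(SE)\cong L_R(E)\otimes M_\infty(R)$ and $C^*(SE)\cong C^*(E)\otimes\K$. The compatibility you flag as the remaining obstacle is precisely what the paper's route supplies: composing $L_R(SE)\cong A_R(\G_{SE})\cong A_R(\G_E\times\mathcal{R})\cong A_R(\G_E)\otimes M_\infty(R)\cong L_R(E)\otimes M_\infty(R)$ yields a stabilisation isomorphism that is simultaneously $*$-preserving, diagonal-preserving, and graded (the grading matches because $c_{\bar k}\circ\chi_E=\bar{c_k}$ is trivial on the $\mathcal{R}$ factor, i.e.\ the head paths all have degree $e$, as you observe). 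Your identification $\Gamma_{\bar k}=\Gamma_k$ is correct since the head edges carry weight $0$. The only practical difference is that your treatment of (5) re-derives what the paper simply cites from \cite{CR}; for that step you need the gauge-equivariant, diagonal-preserving identification $C^*(SE)\cong C^*(E)\otimes\K$, which is available from \cite{CRS} and \cite{T2}, so nothing is lost.
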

	
\begin{proof}
It is shown in \cite[Lemma~4.1]{CRS} that there is an isomorphism $\chi_E:\G_E\times\mathcal{R}\to\G_{SE}$. It is straightforward to check that $c_{\bar{k}}\circ\chi_E=\bar{c_k}$. Similarly, there is an isomorphism $\chi_F:\G_F\times\mathcal{R}\to\G_{SF}$ such that $c_{\bar{l}}\circ\chi_F=\bar{c_l}$. An argument similar to the one used to prove the equivalence of (1)--(4) of Theorem~\ref{thm:1}, using Theorem~\ref{thm:stable} instead of Theorem~\ref{thm:steinberg}, shows that (1)--(4) are equivalent.
	

The equivalence of (1) and (5) is proved in \cite[Theorem~3.3]{CR}.
\end{proof}

\subsection{Two-sided conjugacy and flow equivalence of finite graphs}\label{sec:conjugacy}

For a finite directed graph $E$ with no sinks or sources, we define $\TSS_E$ to be the two-sided edge shift 
$$\TSS_E := \{ (x_n)_{n\in\Z}: x_n \in E^1\text{ and } r(x_n) = s(x_{n+1})\text{ for all }n\in\mathbb{Z}\}$$
equipped with the induced topology of the product topology of $(E^1)^{\mathbb{Z}}$ (where each copy of $E^1$ is given the discrete topology), and we let $\tss_E:\TSS_E\to\TSS_E$ be the homeomorphism given by $(\tss_E(x))_n=x_{n+1}$ for $x=(x_n)_{i\in\Z}\in\TSS_E$. 

If $E$ and $F$ are finite directed graph $E$ with no sinks or sources, then a \emph{conjugacy} from $\TSS_E$ to $\TSS_F$ is a homeomorphism $\phi:\TSS_E\to\TSS_F$ such that $\tss_F\circ\phi=\phi\circ\tss_E$. The shift spaces $\TSS_E$ and $\TSS_F$ are said to be \emph{conjugate} if there is a conjugacy from $\TSS_E$ to $\TSS_F$. 

For a directed graph $E$, we denote by $\overline k_E$ the map from $(SE)^1$ to $\mathbb{R}$ given by $\overline k_E(e)=1$ for $e\in E^1$, and $\overline k_E(e_{i,v})=0$ for $v\in E^0$ and $i=1,2,\dots$.

\begin{corollary}\label{thm:43}
	Let $E$ and $F$ be directed graphs and let $R$ be a commutative integral domain with identity and $K$ a kind unital $*$-subring of $\C$. The following are equivalent.
	\begin{enumerate}
		\item[(1)] There is an isomorphism $\psi:\G_{SE} \to\G_{SF}$ such that $c_{\overline k_F}\circ\psi=c_{\overline k_E}$.
		\item[(2)] There is a diagonal-preserving $*$-algebra-isomorphism $\phi:L_R(E)\otimes M_\infty(R)\to L_R(F)\otimes M_\infty(R)$ such that $\phi(L_R(E)_n\otimes M_\infty(R))=L_R(F)_n\otimes M_\infty(R)$ for $n\in\mathbb{Z}$.
		\item[(3)] There is a diagonal-preserving ring-isomorphism $\phi:L_R(E)\otimes M_\infty(R)\to L_R(F)\otimes M_\infty(R)$ such that $\phi(L_R(E)_n\otimes M_\infty(R))=L_R(F)_n\otimes M_\infty(R)$ for $n\in\mathbb{Z}$.
		\item[(4)] There is a $*$-ring-isomorphism $\phi:L_K(E)\otimes M_\infty(K)\to L_K(F)\otimes M_\infty(K)$ such that $\phi(L_K(E)_n\otimes M_\infty(K))=L_K(F)_n\otimes M_\infty(K)$ for $n\in\mathbb{Z}$.	
		\item[(5)] There is a diagonal-preserving $*$-isomorphism $\phi:C^*(E)\otimes\mathcal{K}\to C^*(F)\otimes\mathcal{K}$ such that $(\lambda_z^F \otimes\id_\mathcal{K})\circ\phi=\phi\circ(\lambda^E_z\otimes\id_\mathcal{K})$ for $z\in\mathbb{T}$.
	\end{enumerate}
	Suppose that $E$ and $F$ are finite directed graphs with no sinks or sources. Then (1)--(5) are equivalent to the following.
	\begin{enumerate}
		\item[(6)] The two-sided edge shifts $\TSS_E$ and $\TSS_F$ are conjugate.
	\end{enumerate}
\end{corollary}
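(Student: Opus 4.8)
The plan is to obtain the equivalence of conditions (1)--(5) directly as a specialisation of Theorem~\ref{thm:2}, and then to bolt on the equivalence with condition (6) by invoking the flow-equivalence/conjugacy machinery for finite graphs. For the first part, I would observe that $\overline k_E$ is precisely the function $\bar k$ associated (in the notation preceding Theorem~\ref{thm:2}) to the constant function $k\equiv 1$ on $E^1$, and likewise $\overline k_F=\bar l$ for $l\equiv 1$ on $F^1$. With $k\equiv 1$ we have $\Gamma_k=\mathbb{Z}$ (as recorded in the discussion of eventual conjugacy preceding Corollary~\ref{thm:3}), and the grading $L_R(E)^k_g$ collapses to the standard $\mathbb{Z}$-grading $L_R(E)_n$. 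Hence conditions (1)--(5) of the present corollary are literally conditions (1)--(5) of Theorem~\ref{thm:2} with these choices of $k$ and $l$, and their equivalence is immediate once one checks the condition $\Gamma_k=\Gamma_l=\mathbb{Z}$ holds automatically (so it drops out of the statements), which it does.

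The substantive new content is the equivalence of (1)--(5) with condition (6), and this is where I would lean on the literature for finite graphs. The natural route is to identify condition (1)---an isomorphism $\psi:\G_{SE}\to\G_{SF}$ intertwining the distinguished $\mathbb{Z}$-cocycles $c_{\overline k_E}$ and $c_{\overline k_F}$---with the statement that the stabilised graph groupoids are graded-isomorphic, and then to appeal to the theorem of \cite{CR} (analogous to the $C^*$-algebraic results of \cite{CEOR}) that characterises conjugacy of the two-sided edge shifts $\TSS_E$ and $\TSS_F$ in terms of exactly such a graded groupoid isomorphism. Concretely, I would cite the relevant result from \cite{CR} asserting that, for finite graphs with no sinks or sources, $\TSS_E$ and $\TSS_F$ are conjugate if and only if there is a $*$-isomorphism $C^*(E)\otimes\mathcal{K}\to C^*(F)\otimes\mathcal{K}$ that is diagonal-preserving and intertwines the gauge actions---that is, precisely condition (5). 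Since Theorem~\ref{thm:2} already gives (5)$\iff$(1), chaining these yields (6)$\iff$(1).

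I would carry out the steps in this order: first, verify the translation $\overline k_E=\bar k$ for $k\equiv 1$ and confirm $\Gamma_k=\mathbb{Z}$, so that Theorem~\ref{thm:2} applies verbatim and delivers the equivalence of (1)--(5); second, invoke the finite-graph conjugacy characterisation of \cite{CR} to supply (5)$\iff$(6) (equivalently (1)$\iff$(6)) under the additional hypothesis that $E$ and $F$ are finite with no sinks or sources. The main obstacle I anticipate is not in the algebraic bookkeeping---that is routine given Theorem~\ref{thm:2}---but in correctly matching the cocycle and grading conventions between the stabilised picture used here (via the heads-attached graph $SE$ and the function $\overline k_E$) and the conventions in \cite{CR} under which the conjugacy theorem is stated, so that the intertwining of gauge actions in (5) corresponds exactly to conjugacy of $\TSS_E$ and $\TSS_F$ rather than to some weaker equivalence. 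Care is also needed because conjugacy of two-sided shifts, as opposed to flow equivalence or one-sided notions, is the genuinely finer invariant, and one must ensure the cited result from \cite{CR} is the two-sided conjugacy statement and that the finiteness and no-sinks-no-sources hypotheses are exactly those required there.
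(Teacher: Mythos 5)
Your proposal matches the paper's proof: the paper likewise obtains (1)--(5) by applying Theorem~\ref{thm:2} with $k$ and $l$ constantly equal to $1$ (so that $\overline k_E=\bar k$ and the grading is the standard $\mathbb{Z}$-grading), and then cites \cite[Theorem~5.1]{CR} for the equivalence with condition (6) when $E$ and $F$ are finite with no sinks or sources. Your routing of the \cite{CR} result through condition (5) rather than condition (1) is an immaterial difference, since Theorem~\ref{thm:2} already identifies the two.
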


\begin{proof} Applying Theorem~\ref{thm:2} to the functions $k:E^1\to\mathbb{R}$ and $l:F^1\to\mathbb{R}$ that are constanly equal to 1 shows that (1)--(5) are equivalent. The equivalence of (1) and (6), when $E$ and $F$ are finite directed graphs with no sinks or sources, is proved in \cite[Theorem~5.1]{CR}.
\end{proof}


\begin{corollary}\label{thm:4}
Let $E$ and $F$ be directed graphs, let $R$ be a commutative integral domain with identity and $K$ a kind unital $*$-subring of $\C$. The following are equivalent.
\begin{enumerate}	
	\item[(1)] The groupoids $\G_{SE}$ and $\G_{SF}$ are isomorphic.
	\item[(2)] There is a diagonal-preserving $*$-algebra-isomorphism between $L_R(SE)$ and $L_R(SF)$.
	\item[(3)] There is a diagonal-preserving ring-isomorphism between $L_R(SE)$ and $L_R(SF)$.
	\item[(4)] There is a $*$-ring-isomorphism between $L_K(SE)$ and $L_K(SF)$.
	\item[(5)] There is a diagonal-preserving $*$-isomorphism between $C^*(SE)$ and $C^*(SF)$.
	\item[(6)] There is a diagonal-preserving $*$-algebra-isomorphism between $L_R(E)\otimes M_\infty(R)$ and $L_R(F)\otimes M_\infty(R)$.
	\item[(7)] There is a diagonal-preserving ring-isomorphism between $L_R(E)\otimes M_\infty(R)$ and \linebreak $L_R(F)\otimes M_\infty(R)$.
	\item[(8)] There is a $*$-ring-isomorphism between $L_K(E)\otimes M_\infty(K)$ and $L_K(F)\otimes M_\infty(K)$.
	\item[(9)] There is a diagonal-preserving $*$-isomorphism between $C^*(E)\otimes\mathcal{K}$ and \newline $C^*(F)\otimes\mathcal{K}$.	
\end{enumerate}		
\end{corollary}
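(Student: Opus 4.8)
The plan is to split the nine conditions into two blocks: the first block $(1)$--$(5)$ concerns the graphs $SE$ and $SF$ directly, while the second block $(6)$--$(9)$ concerns the stabilised algebras of $E$ and $F$. I would first observe that $(1)$--$(5)$ are nothing but the five conditions of Corollary~\ref{cor:orbit} applied to the pair of directed graphs $SE$ and $SF$ in place of $E$ and $F$. Since $SE$ and $SF$ are themselves directed graphs, Corollary~\ref{cor:orbit} applies verbatim and delivers the equivalence of $(1)$ through $(5)$ with no further work.

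The remaining task is to link the two blocks, and for this I would record the stabilisation identifications. By \cite[Lemma~4.1]{CRS} there is a groupoid isomorphism $\chi_E\colon\G_E\times\mathcal{R}\to\G_{SE}$, and, as established in the proof of Theorem~\ref{thm:stable}, there is a diagonal-preserving $*$-algebra-isomorphism between $A_R(\G_E\times\mathcal{R})$ and $A_R(\G_E)\otimes M_\infty(R)$. Composing these, and using the identifications $A_R(\G_{SE})\cong L_R(SE)$ and $A_R(\G_E)\cong L_R(E)$ of \cite[Example~3.2]{CS}, I obtain a diagonal-preserving $*$-algebra-isomorphism $L_R(SE)\cong L_R(E)\otimes M_\infty(R)$ carrying $D_R(SE)$ onto $D_R(E)\otimes D_\infty(R)$. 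The same construction with $K$ in place of $R$ gives a $*$-algebra-isomorphism $L_K(SE)\cong L_K(E)\otimes M_\infty(K)$, and the $C^*$-analogue of $\chi_E$ together with $C^*(\G\times\mathcal{R})\cong C^*(\G)\otimes\K$ gives a diagonal-preserving $*$-isomorphism $C^*(SE)\cong C^*(E)\otimes\K$ taking $\mathcal{D}(SE)$ onto $\mathcal{D}(E)\otimes\mathcal{C}$. I would record the analogous identifications for $F$.

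With these isomorphisms in hand, I would transfer each condition of the first block to the corresponding condition of the second block by composing on either side. Concretely, pre- and post-composing an isomorphism between the $SE$- and $SF$-algebras with the stabilisation identifications for $E$ and $F$ produces an isomorphism between the stabilised $E$- and $F$-algebras of exactly the same type, and conversely; this yields $(2)\Leftrightarrow(6)$, $(3)\Leftrightarrow(7)$, $(4)\Leftrightarrow(8)$ and $(5)\Leftrightarrow(9)$. For the pairs $(2)/(6)$, $(3)/(7)$ and $(5)/(9)$ the stabilisation identifications are diagonal-preserving, so diagonal-preservation is carried across directly; for $(4)/(8)$ the statements require only a $*$-ring-isomorphism, so no diagonal condition needs to be tracked, although one may note that kindness of $K$ forces diagonal-preservation automatically via \cite[Corollary~6]{Car}.

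I expect the main obstacle to be bookkeeping rather than conceptual: one must verify that every identification used is simultaneously a $*$-map and diagonal-preserving, and in particular that the diagonal $D_R(SE)$ of $L_R(SE)$ is carried precisely onto $D_R(E)\otimes D_\infty(R)$ (and likewise for the $C^*$-diagonals), so that the diagonal-preserving hypotheses supplied by Corollary~\ref{cor:orbit} match the diagonal-preserving hypotheses appearing in $(6)$, $(7)$ and $(9)$. All of these compatibilities are already implicit in the constructions of \cite{CRS} and in the proof of Theorem~\ref{thm:stable}, so the argument is an assembly of results already available rather than a new computation.
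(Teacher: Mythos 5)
Your proposal is correct and follows essentially the same route as the paper: the paper obtains (1)--(5) by applying Theorem~\ref{thm:1} to $SE$ and $SF$ with the zero cocycles (which is exactly Corollary~\ref{cor:orbit} for $SE$, $SF$), and obtains the equivalence of (1) with (6)--(9) by citing Theorem~\ref{thm:2} with zero cocycles. Your linking of the two blocks via the diagonal-preserving $*$-identifications $L_R(SE)\cong L_R(E)\otimes M_\infty(R)$, $L_K(SE)\cong L_K(E)\otimes M_\infty(K)$ and $C^*(SE)\cong C^*(E)\otimes\K$ (built from \cite[Lemma~4.1]{CRS} and the stabilisation isomorphism used in Theorem~\ref{thm:stable}) is just an inlining of the proof of Theorem~\ref{thm:2} in the ungraded case, so no genuinely different ideas are involved.
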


\begin{proof}
The equivalence of (1)-(5) follows from Theorem~\ref{thm:1} by letting $k:E^1\to\mathbb{R}$ and $l:F^1\to\mathbb{R}$ be constantly equal to 0, and the equivalence of (1) and (6)--(9) follows from Theorem~\ref{thm:2} by letting $k:E^1\to\mathbb{R}$ and $l:F^1\to\mathbb{R}$ be constantly equal to 0. \end{proof}
 

\begin{remark}
By Remark~\ref{rmk:orbit}, condition (1) is equivalent to the following.
\begin{enumerate}
\item[(10)] There is an orbit equivalence between $SE$ and $SF$ that preserves isolated periodic points.
\end{enumerate}

Moreover, it follows from \cite[Corollary 4.6]{CW} and the discussion right before \cite[Proposition 3.1]{CW} that if either $E$ and $F$ each satisfy condition (L) (i.e., every cycle in $E$ and every cycle in $F$ have an exit), or $E$ and $F$ each have only finitely many vertices and no sinks, then any orbit equivalence between $SE$ and $SF$ automatically preserves isolated periodic points.

It follows from Corollary~\ref{cor:stable} that (6) and (7) are each equivalent to each of the following four conditions.
\begin{enumerate}
	\item[(11)] The groupoids $\G_E\times\mathcal{R}$ and $\G_E\times\mathcal{R}$ are isomorphic.
	\item[(12)] The groupoids $\G_E$ and $\G_F$ are Kakutani equivalent.
	\item[(13)] The groupoids $\G_E$ and $\G_F$ are groupoid equivalent.
	\item[(14)] There are idempotents $p_E\in D_R(E)$ and $p_F\in D_R(F)$ such that $p_E$ is full in $L_R(E)$ and $p_F$ is full in $L_R(F)$, and a ring-isomorphism $\phi:p_EL_R(E)p_E\to p_FL_R(F)p_F$ such that $\phi(p_ED_R(E))=p_FD_R(F)$.
	\end{enumerate}
	
By \cite[Corollary~4.5]{CRS}, condition (12) is equivalent to each of the following two conditions.
\begin{enumerate}
	\item[(15)] There are projections $p_E\in D_R(E)$ and $p_F\in D_R(F)$ such that $p_E$ is full in $L_R(E)$ and $p_F$ is full in $L_R(F)$, and a $*$-algebra-isomorphism $\phi:p_EL_R(E)p_E\to p_FL_R(F)p_F$ such that $\phi(p_ED_R(E))=p_FD_R(F)$.
	\item[(16)] There are projections $p_E\in \mathcal{D}(E)$ and $p_F\in \mathcal{D}(F)$ such that $p_E$ is full in $C^*(E)$ and $p_F$ is full in $C^*(F)$, and a $*$-isomorphism $\phi:p_EC^*(E)p_E\to p_FC^*(F)p_F$ such that $\phi(p_E\mathcal{D}(E))=p_F\mathcal{D}(F)$.		
\end{enumerate}

It follows from \cite[Proposition 5]{Car} that a $*$-ring-isomorphism $\phi:p_EL_K(E)p_E \to\linebreak p_FL_K(F)p_F$ automatically satisfies $\phi(p_ED_K(E))=p_FD_K(F)$. It therefore follows from \cite[Corollary 4.5]{CRS} that (12) is also equivalent to the following.
\begin{enumerate}
	\item[(17)] There are projections $p_E\in D_K(E)$ and $p_F\in D_K(F)$ such that $p_E$ is full in $L_K(E)$ and $p_F$ is full in $L_K(F)$, and a $*$-ring-isomorphism between $p_EL_K(E)p_E$ and $p_FL_K(F)p_F$. 
\end{enumerate}

If $E$ and $F$ only have finitely many vertices, then it follows from \cite[Corollary 4.9]{CRS} that the following condition implies (11).
	\begin{enumerate}
		\item[(18)] The graphs $E$ and $F$ are move equivalent as defined in \cite{Sor}.
	\end{enumerate}
	
	If $E$ and $F$ are finite and have no sinks or sources, then it follows from \cite[Corollary 6.4]{CEOR} that conditions (11) and (18) are equivalent and are also equivalent to the following condition.
	\begin{enumerate}
		\item[(19)] The shifts $\TSS_E$ and $\TSS_F$ are flow equivalent.
	\end{enumerate}

\end{remark}

\section*{Acknowledgements}
James is grateful for the provision of travel support from Aidan Sims and from the University of Wollongong's Institute for Mathematics and its Applications. He is also grateful for the hospitality of Toke and his family and the University of the Faroe Islands during his visit. He also thanks his father Donald for supporting him.

\end{document}